\newtheorem{theorem}{Theorem}[section]
\newtheorem{lemma}[theorem]{Lemma}
\newtheorem{corollary}[theorem]{Corollary}
\newtheorem{proposition}[theorem]{Proposition}
\newtheorem{remark}[theorem]{Remark}
\newtheorem{notation}[theorem]{Notation}
\theoremstyle{definition}
\newcommand{\beqa}{\begin{eqnarray*}}
\newcommand{\eeqa}{\end{eqnarray*}}
\newcommand{\field}[1]{\mathbb{#1}}
\newcommand{\bR}{\field{R}}
\def\cF{\mathcal{F}}
\def\rd{\bR^n}
\def\<{\left<}
\def\>{\Big)>}
\def\mv1{M_v^1}
\def\mn{(m,n)}
\def\mn'{(m',n')}
\def\Ren{\mathbb{R}^n}
\def\tauhz0{\widehat{\mathcal{T}}^\hbar(z_0)}
\def\tauhz{\widehat{\mathcal{T}}^\hbar(z)}
\def\Sn2{S_{2}(L^{2}(\Ren))}
\def\S1{S_{1}(L^{2}(\Ren))}
\def\sig00{\sigma_{0,0}}
\begin{document}
\title[On the symplectic covariance and interferences] {On the symplectic covariance and interferences of time-frequency
distributions}
\author{Elena Cordero}
\address{Universit\`a di Torino, Dipartimento di Matematica, via Carlo
Alberto 10, 10123 Torino, Italy}
\email{elena.cordero@unito.it}
\thanks{}
\author{Maurice de Gosson}
\address{University of Vienna, Faculty of Mathematics,
Oskar-Morgenstern-Platz 1 A-1090 Wien, Austria}
\email{maurice.de.gosson@univie.ac.at}
\thanks{}
\author{Monika D\"{o}rfler}
\address{University of Vienna, Faculty of Mathematics,
Oskar-Morgenstern-Platz 1 A-1090 Wien, Austria}
\thanks{}
\email{monika.doerfler@univie.ac.at}
\author{Fabio Nicola}
\address{Dipartimento di Scienze Matematiche, Politecnico di Torino, corso
Duca degli Abruzzi 24, 10129 Torino, Italy}
\email{fabio.nicola@polito.it}
\thanks{}
\subjclass[2010]{42B10, 42B37}
\keywords{Time-frequency analysis, covariance property, symplectic group,
interferences, Wigner distribution, Gabor frames, modulation spaces}
\date{}

\begin{abstract}
We study the covariance property of quadratic time-frequency distributions
with respect to the action of the extended symplectic group. We show how
covariance is related, and in fact in competition, with the possibility of
damping the interferences which arise due to the quadratic nature of the
distributions. We also show that the well known fully covariance property of
the Wigner distribution in fact characterizes it (up to a constant factor)
among the quadratic distributions $L^{2}(\mathbb{R}^{n})\rightarrow C_{0}({%
\mathbb{R}^{2n}})$. A similar characterization for the %
{closely} related Weyl transform is given as well. The
results are illustrated by several numerical experiments for the Wigner and
Born-Jordan distributions of the sum of four Gaussian functions in the
so-called \textquotedblleft diamond configuration\textquotedblright .
\end{abstract}

\maketitle

%\title[$\hbar$-Gabor frames for modulation spaces and applications] {$\hbar$-Gabor frames for modulation spaces and applications to Schr\"{o}dinger equations}

%\curraddr{\"{y} }

%\curraddr{\"{y} }

%\curraddr{\"{y} }

%\curraddr{\"{y} }

\section{Introduction}

The importance of alternatives to the Wigner transform in both
time-frequency analysis and quantum mechanics {should} not be
underestimated. Recent work in signal processing has shown that it may be
advantageous to use variants of it to reduce unwanted interference effects, 
\cite{oehl,pikula,turunen}, while seems that one particular distribution, closely
related to the physicist's Born--Jordan quantization, plays an essential
(and not yet fully understood) role in quantum mechanics. It turns out that,
luckily enough, all these transforms are particular cases of what is
commonly called the \textquotedblleft Cohen class\textquotedblright\cite%
{Cohen1,Cohenbook,book}; this class consists of all transforms $Qf$ obtained
from the Wigner distribution $Wf$ {by convolving} with a
distribution $\theta\in\mathcal{S}^{\prime}(\mathbb{R}^{2n})$: $%
Qf=Wf\ast\theta$. In the present paper we set out to analyze and discuss the
relative advantages of such general transforms from the point of view of
their symplectic covariance properties and their effect on the interferences
due to the cross-terms $Q(f,g)$ in the non-additivity of $Q$: 
\begin{equation}  \label{heiwe}
Q(f+g)=Qf+Qg+2\func{Re}Q(f,g).
\end{equation}
Precisely, let $\widehat{T}(z_{0})=e^{-\frac{i}{\hslash}\sigma(\hat{z}%
,z_{0})}$, $z_0=(x_0,p_0)\in\mathbb{R}^{2n}$, be the Heisenberg operator: 
\begin{equation*}
\widehat{T}(z_{0})f(x)=e^{\tfrac{i}{\hslash}(p_{0}x-\tfrac{1}{2}%
p_{0}x_{0})}f(x-x_{0}),\quad x\in\mathbb{R}^n.
\end{equation*}
Every distribution $Qf$ in the Cohen class enjoys the covariance property
with respect to time-frequency shifts: 
\begin{equation}  \label{uno-i}
Q(\widehat{T}(z_{0})f)(z)=Q f(z-z_0),\quad z_0, z\in\mathbb{R}^{2n}.
\end{equation}
However, in general such a distribution does not enjoy covariance with
respect to all symplectic transformations of the time-frequency plane: if $%
\widehat{S}$ is an element of the metaplectic group (regarded as a unitary
operator in $L^2(\mathbb{R}^n)$) and $S\in\mathrm{Sp}(n,\mathbb{R})$ is the
corresponding symplectic transformation we prove (Proposition \ref{pro36})
that 
\begin{equation}  \label{due-i}
Q(\widehat{S}f)(z)=Wf\ast(\theta\circ S)(S^{-1}z),\quad \forall f\in\mathcal{%
S}(\mathbb{R}^n),\quad z\in\mathbb{R}^{2n}.
\end{equation}
The Wigner distribution corresponds to the case $\theta=\delta$. Since, in
that case, $\theta\circ S=\theta$, from \eqref{uno-i}, \eqref{due-i} we
recapture the well known symplectic covariance of the Wigner distribution
with respect to the \textit{extended symplectic group}, i.e.\ the semidirect
product $\mathbb{R}^{2n}\rtimes \mathrm{Sp}(n,\mathbb{R})$.

As we will see, the covariance property and the reduction of interferences
are related in a subtle manner. For instance, as already observed, if $Qf=Wf$
we have full symplectic covariance; this implies, in particular, that one
cannot eliminate or damp the interference effect by a symplectic rotation of
the coordinates. If we choose instead the Born--Jordan distribution, which
corresponds to 
\begin{equation*}
\theta=\mathcal{F}_{\sigma}\Big(\mathrm{sinc} \left(\frac {px}{2\hbar}\right)%
\Big)
\end{equation*}
(where $\mathcal{F}_{\sigma}$ is the symplectic Fourier transform defined in %
\eqref{sympft}), we {lose} covariance with respect to the
subgroup of the symplectic group consisting of \textquotedblleft symplectic
shears\textquotedblright (see Corollary \ref{cor42} below),
but this allows us at the same time to dampen the interference effects by
rotating the coordinate system. In the general case we have similarly a
trade-off between cross-interferences and the symmetry group of $Qf$.

We illustrate graphically the compared attenuation effects of the Wigner
transform and of the Born--Jordan--Wigner transform by considering a
so-called \textquotedblleft diamond state\textquotedblright\ consisting of
four Gaussians; such structures have been studied by Zurek \cite{Zurek} in
the Wigner case in relation with the appearance of sub-quantum effects in
the theory of Gaussian superpositions. We make it clear that these effects
are greatly attenuated in the Born--Jordan case; we refer to \cite{cogoni16} for an explanation in terms of wave front sets.

Formula \eqref{due-i} motivates the study of the class of temperate
distributions $\theta\in\mathcal{S}^{\prime }(\mathbb{R}^{2n})$ such that $%
\theta\circ S=\theta$ for every $S\in \mathrm{Sp}(n,\mathbb{R})$. We
completely characterize such a class (Proposition \ref{teosimpinv}), and
thereafter use the result to characterize the Wigner distribution. Namely,
we propose a precise formulation of the folklore statement that the Wigner
distribution has special covariance properties among {all}
quadratic time-frequency distributions: we prove (Theorem \ref{teonew})
that, up to a constant factor,

\smallskip \textit{The Wigner transform is the only quadratic time-frequency
distribution $L^2(\mathbb{R}^n)\to C_0({\mathbb{R}^{2n}})$ which enjoys
covariance with respect to the extended symplectic group}.

\smallskip Here $C_0({\mathbb{R}^{2n}})$ is the space of continuous function
on the time-frequency plane which vanish at infinity; this decay condition
is essential to rule out distributions $Qf=Wf\ast \theta$ with $\theta=%
\mathrm{constant}$, which are in fact fully covariant too. In spite of the primary role of the
Wigner distribution in Time-frequency Analysis and Mathematical Physics and
the immense work of mathematicians, physicists and engineers, it seems that
the above clean characterization has never appeared in the literature.

Similarly, we provide a characterization in terms of covariance with respect
to the extended symplectic group for the Weyl transform, i.e.\ the linear
operator $A:\mathcal{S}(\mathbb{R}^n)\to\mathcal{S}^{\prime }(\mathbb{R}^n)$
defined by $\langle Af,f\rangle=\langle a, Wf\rangle$, for some symbol $a\in%
\mathcal{S}^{\prime }({\mathbb{R}^{2n}})$. Let us emphasize that the well
known characterization of the Weyl transform (\cite[Sections 7.5-7.6, pages
578-579]{stein} and \cite[Theorem 30.2]{wong}) involves instead the
non-extended symplectic group, but requires an additional condition (see
Remark \ref{rem00} below), which is instead dropped in our result. Hence our
characterization partially intersects the known one. Moreover, the proof is
different and, in fact, even more transparent.

\smallskip Briefly, the paper is organized as follows. In Section \ref{sec1}
we recall basic material on Wigner and Born-Jordan distributions, the
corresponding transforms, and we review the recent advances in the theory of
Born--Jordan quantization; for proofs and details we refer to Cordero et
al.\ \cite{cogoni15}, de Gosson \cite{TRANSAM,Springer}. In Section 3 we
study the Wigner and Born-Jordan distribution of superpositions of Gaussians
and we illustrate graphically the appearance of the interferences, and how
they are affected by rotations in the time-frequency plane. In Section 4 we
study in full generality the covariance property of the time-frequency
distributions in the Cohen class with respect to symplectic transformations.
In particular we prove the above mentioned characterization of the Wigner
transform and the associated Weyl operator.

\begin{notation}
We will use multi-index notation: $\alpha=(\alpha_{1},...,\alpha_{n})\in%
\mathbb{N}^{n}$, $|\alpha|=\alpha_{1}+\cdot\cdot\cdot+\alpha_{n}$, and $%
\partial_{x}^{\alpha}=\partial_{x_{1}}^{\alpha_{1}}\cdot\cdot\cdot
\partial_{x_{n}}^{\alpha_{n}}$. We denote by $\sigma$ the standard
symplectic form on the phase space $\mathbb{R}^{2n}\equiv\mathbb{R}%
^{n}\times \mathbb{R}^{n}$; the phase space variable is denoted $z=(x,p)$.
By definition $\sigma(z,z^{\prime})=Jz\cdot z^{\prime}$ where $J=%
\begin{pmatrix}
0 & I \\ 
-I & 0%
\end{pmatrix}
$.

We will use the notation $\widehat{x}_{j}$ for the operator of
multiplication by $x_{j}$ and $\widehat{p}_{j}=-i\hbar\partial/\partial
x_{j} $. These operators satisfy Born's canonical commutation relations $[%
\widehat{x}_{j},\widehat{p}_{j}]=i\hbar$.

The symplectic Fourier transform of a function $a(z)$ in phase space ${%
\mathbb{R}^{2n}}$ is normalized as 
\begin{equation}
\mathcal{F}_{\sigma }(a)(z)=a_{\sigma }(z)=\left( \tfrac{1}{2\pi \hbar }%
\right) ^{n}\int e^{-\frac{i}{\hbar }\sigma (z,z^{\prime })}a(z^{\prime
})dz^{\prime },\quad z\in \mathbb{R}^{2n}.  \label{sympft}
\end{equation}%
We will use the following variant of the Fourier transform:
\begin{equation}\label{FT}
\cF f(x)=\left( \tfrac{1}{2\pi \hbar }\right)^{n/2} \int e^{-%
\tfrac{i}{\hbar} p\cdot y} f(y) dy
\end{equation}
The translation operator in the time-frequency plane is defined by 
\begin{equation*}
\lbrack T(z_{0})a](z)=a(z-z_{0}),\quad z,z_{0}\in {\mathbb{R}^{2n}}.
\end{equation*}%
We denote by $\mathrm{Mp}(n,\mathbb{R})$ the metaplectic group, that is the
double covering of the symplectic group $\mathrm{Sp}(n,\mathbb{R})$. As is
well known, the elements of $\mathrm{Mp}(n,\mathbb{R})$ can be regarded as
unitary operators in $L^{2}(\mathbb{R}^{n})$ ($\mathrm{Mp}(n,\mathbb{R})$
has a faithful strongly continuous unitary representation in $L^2(\rd)$). We will %
{reserve} the notation $\widehat{S}\in \mathrm{Mp}(n,\mathbb{R%
})$ for a metaplectic operator and $S=\pi ^{\mathrm{Mp}}(\widehat{S})\in 
\mathrm{Sp}(n,\mathbb{R})$ for its projection in $\mathrm{Sp}(n,\mathbb{R})$%
; see \cite[Chapter 7]{Birkbis} for more details.
\end{notation}

\section{Wigner and Born--Jordan distributions and associated
pseudodifferential operators\label{sec1}}

\subsection{Wigner and Born-Jordan distributions}

We recall that the Wigner-Moyal transform of $f,g\in\mathcal{S}(\mathbb{R}%
^{n})$ is defined by%
\begin{equation}
W(f,g)(z)=\left( \tfrac{1}{2\pi\hbar}\right) ^{n}\int_{\mathbb{R}^{n}}e^{-%
\tfrac{i}{\hbar}p\cdot y}f(x+\tfrac{1}{2}y)g^{\ast}(x-\tfrac{1}{2}y)dy,\quad
z=(x,p).  \label{wimo}
\end{equation}
We also set 
\begin{equation*}
Wf=W(f,f).
\end{equation*}
Let $\widehat{T}(z_{0})=e^{-\frac{i}{\hslash}\sigma(\hat{z},z_{0})}$ be the
Heisenberg operator already defined in \eqref{heiwe}. We have the
translation formula%
\begin{equation*}
W(\widehat{T}(z_{0})f)(z)=T(z_0)Wf(z)=Wf(z-z_{0}).
\end{equation*}

Recall that the Wigner transform satisfies the following translation
property:  for every $f\in L^{2}(\mathbb{R}^{n})$ and $z_{0}\in \mathbb{R}%
^{2n}$ we have%
\begin{equation}
W(\widehat{T}(z_{0})f,\widehat{T}(z_{0})g)(z)=T(z_{0})W(f,g)(z).  \label{wt8}
\end{equation}%
In particular 
\begin{equation}
W(\widehat{T}(z_{0})f)=T(z_{0})Wf.  \label{trans1}
\end{equation}%
More generally, the following result holds true (\cite{Birk,folland}):

\begin{proposition}
\label{transam}If $f,g\in L^{2}(\mathbb{R}^{n})$, then 
\begin{equation}
W(\widehat{T}(z_{0})f,\widehat{T}(z_{1})g)(z)=e^{-\frac{i}{\hbar }[\sigma
(z,z_{0}-z_{1})+\frac{1}{2}\sigma (z_{0},z_{1})]}W(f,g)(z-\langle z\rangle )
\label{wt}
\end{equation}%
where $\langle z\rangle =\frac{1}{2}(z_{0}+z_{1})$.
\end{proposition}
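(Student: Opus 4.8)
This is standard (see \cite{Birk,folland}); I sketch a direct proof from the definitions. It suffices to establish \eqref{wt} for $f,g\in\mathcal{S}(\mathbb{R}^n)$, the general $L^2$ case following by density together with the $L^2$-continuity of both sides (Moyal's identity makes $(f,g)\mapsto W(f,g)$ bounded $L^2\times L^2\to L^2(\mathbb{R}^{2n})$, and the right-hand side operations are continuous on $L^2(\mathbb{R}^{2n})$). Write $z=(x,p)$, $z_0=(x_0,p_0)$, $z_1=(x_1,p_1)$. From the definition of $\widehat{T}(z_0)$,
\[
[\widehat{T}(z_0)f]\big(x+\tfrac12 y\big)=e^{\frac{i}{\hbar}\big(p_0\cdot(x+\frac12 y)-\frac12 p_0\cdot x_0\big)}\,f\big(x+\tfrac12 y-x_0\big),
\]
and similarly for $[\widehat{T}(z_1)g]\big(x-\tfrac12 y\big)$, whose complex conjugate contributes the opposite sign in the exponent. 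Plugging both into the defining integral \eqref{wimo} of $W(\widehat{T}(z_0)f,\widehat{T}(z_1)g)(z)$, the three exponentials combine to $e^{\frac{i}{\hbar}\phi}$ with
\[
\phi=-p\cdot y+p_0\cdot\big(x+\tfrac12 y\big)-\tfrac12 p_0\cdot x_0-p_1\cdot\big(x-\tfrac12 y\big)+\tfrac12 p_1\cdot x_1 .
\]

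The key manipulation is the change of variables $y=y'+(x_0-x_1)$, which has unit Jacobian and is tailored so that $x+\tfrac12 y-x_0=x'+\tfrac12 y'$ and $x-\tfrac12 y-x_1=x'-\tfrac12 y'$ \emph{simultaneously}, where $x'=x-\tfrac12(x_0+x_1)$. After this substitution $f$ and $\overline{g}$ are evaluated precisely at $x'\pm\tfrac12 y'$, the arguments occurring in $W(f,g)$ recentered at $x'$. Collecting in $\phi$ the terms linear in $y'$ gives $-\big(p-\tfrac12(p_0+p_1)\big)\cdot y'=-p'\cdot y'$ with $p'=p-\tfrac12(p_0+p_1)$, i.e.\ exactly the Wigner oscillatory kernel at momentum $p'$; hence the remaining integral equals $W(f,g)(x',p')=W(f,g)(z-\langle z\rangle)$ with $\langle z\rangle=\tfrac12(z_0+z_1)$. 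The leftover, $y'$-independent part of $\phi$ simplifies to $-p\cdot(x_0-x_1)+(p_0-p_1)\cdot x+\tfrac12(p_1\cdot x_0-p_0\cdot x_1)$, and using $\sigma\big((x,p),(x',p')\big)=p\cdot x'-x\cdot p'$ one recognizes this as $-\sigma(z,z_0-z_1)-\tfrac12\sigma(z_0,z_1)$. Pulling this constant phase outside the integral yields \eqref{wt}.

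I do not expect a genuine obstacle here: the only points requiring care are the bookkeeping of the three phase factors, the verification that the single shift $y\mapsto y'+(x_0-x_1)$ recenters both arguments at once, and the matching of the residual constant with the symplectic form under the convention $\sigma(z,z')=Jz\cdot z'$. As consistency checks, setting $z_0=z_1$ recovers the translation formula \eqref{wt8}, and additionally taking $f=g$ recovers \eqref{trans1}. (Alternatively one could reduce to the equal-shift case \eqref{wt8} via the composition law $\widehat{T}(z_0)^{-1}\widehat{T}(z_1)=e^{-\frac{i}{2\hbar}\sigma(z_0,z_1)}\widehat{T}(z_1-z_0)$ plus a single-sided computation of $W(f,\widehat{T}(z_1-z_0)g)$, but this is no shorter than the direct substitution above.)
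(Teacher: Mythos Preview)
Your proof is correct. The paper itself does not supply a proof of this proposition; it simply cites \cite{Birk,folland} and moves on. Your direct substitution argument---inserting the explicit expressions for $\widehat{T}(z_0)f$ and $\widehat{T}(z_1)g$ into \eqref{wimo}, making the single shift $y\mapsto y'+(x_0-x_1)$ to recenter both arguments, and then matching the residual phase with $-\sigma(z,z_0-z_1)-\tfrac12\sigma(z_0,z_1)$ under the convention $\sigma(z,z')=Jz\cdot z'$---is the standard computation one finds in those references, and all the bookkeeping checks out.
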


The Wigner distribution enjoys covariance property with respect to all
(linear) symplectic transformation of the time-frequency plane (\cite%
{Birkbis} p. 151):

\begin{theorem}\label{teo22}
\label{invarianza} Let $\widehat{S}\in\mathrm{Mp}(n,\mathbb{R})$ and $S=\pi^{%
\mathrm{Mp}}(\widehat{S})\in\mathrm{Sp}(n,\mathbb{R})$. We have 
\begin{equation*}
W(\widehat{S}f)(z)=Wf(S^{-1}z), \quad f\in\mathcal{S}(\mathbb{R}^n), \quad z
\in{\mathbb{R}^{2n}}.
\end{equation*}
\end{theorem}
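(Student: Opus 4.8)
The plan is to reduce the symplectic covariance of $W$ to the standard symplectic covariance of the Heisenberg operators, after rewriting $Wf$ as an expectation value of a reflection operator. For $z_{0}=(x_{0},p_{0})\in\mathbb{R}^{2n}$ let $\Pi$ be the reflection operator $\Pi f(x)=f(-x)$ and set $\widehat{T}_{\mathrm{GR}}(z_{0}):=\widehat{T}(z_{0})\,\Pi\,\widehat{T}(z_{0})^{-1}$, the Grossmann--Royer operator; using the definition of $\widehat{T}(z_{0})$ one finds $\widehat{T}_{\mathrm{GR}}(z_{0})f(x)=e^{\frac{2i}{\hbar}p_{0}\cdot(x-x_{0})}f(2x_{0}-x)$, and then the substitution $y=2(x_{0}-x)$ in \eqref{wimo} gives
\[
Wf(z_{0})=\left(\tfrac{1}{\pi\hbar}\right)^{n}\langle\widehat{T}_{\mathrm{GR}}(z_{0})f,f\rangle,\qquad f\in\mathcal{S}(\mathbb{R}^{n}).
\]
This is the only genuine computation in the argument, and it is routine; its point is that the right-hand side behaves transparently under conjugation by a unitary operator.

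Two facts then do the work. First, the symplectic covariance of the Heisenberg operators: $\widehat{S}\,\widehat{T}(z_{0})\,\widehat{S}^{-1}=\widehat{T}(Sz_{0})$ for every $\widehat{S}\in\mathrm{Mp}(n,\mathbb{R})$ with $S=\pi^{\mathrm{Mp}}(\widehat{S})$ (this is essentially the uniqueness part of the Stone--von Neumann theorem; see \cite[Chapter 7]{Birkbis}). Second, the reflection operator commutes with every metaplectic operator, $\widehat{S}\,\Pi\,\widehat{S}^{-1}=\Pi$; this is standard and is in any case immediately verified on the generators of $\mathrm{Mp}(n,\mathbb{R})$ listed below (for a chirp or a dilation it is a one-line check, and $\mathcal{F}$ commutes with $\Pi=\mathcal{F}^{2}$). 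Combining the two we obtain $\widehat{S}\,\widehat{T}_{\mathrm{GR}}(z_{0})\,\widehat{S}^{-1}=\widehat{T}(Sz_{0})\,\Pi\,\widehat{T}(Sz_{0})^{-1}=\widehat{T}_{\mathrm{GR}}(Sz_{0})$, equivalently $\widehat{S}^{-1}\widehat{T}_{\mathrm{GR}}(z_{0})\widehat{S}=\widehat{T}_{\mathrm{GR}}(S^{-1}z_{0})$.

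Granting this, the theorem is immediate. For $f\in\mathcal{S}(\mathbb{R}^{n})$ we also have $\widehat{S}f\in\mathcal{S}(\mathbb{R}^{n})$, and, using the unitarity of $\widehat{S}$,
\begin{align*}
W(\widehat{S}f)(z_{0})&=\left(\tfrac{1}{\pi\hbar}\right)^{n}\langle\widehat{T}_{\mathrm{GR}}(z_{0})\widehat{S}f,\widehat{S}f\rangle=\left(\tfrac{1}{\pi\hbar}\right)^{n}\langle\widehat{S}^{-1}\widehat{T}_{\mathrm{GR}}(z_{0})\widehat{S}f,f\rangle\\
&=\left(\tfrac{1}{\pi\hbar}\right)^{n}\langle\widehat{T}_{\mathrm{GR}}(S^{-1}z_{0})f,f\rangle=Wf(S^{-1}z_{0}).
\end{align*}

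I expect the only real obstacle to be the first ingredient, the conjugation relation $\widehat{S}\widehat{T}(z_{0})\widehat{S}^{-1}=\widehat{T}(Sz_{0})$: once it is available, everything else is bookkeeping. If one prefers not to invoke it as a black box, an equally workable route is to verify $W(\widehat{S}f)(z)=Wf(S^{-1}z)$ directly on operators whose projections generate $\mathrm{Sp}(n,\mathbb{R})$ --- the Fourier transform $\mathcal{F}$ of \eqref{FT}, which projects to $J$; the chirp multiplications $f\mapsto e^{\frac{i}{2\hbar}Px\cdot x}f$ with $P=P^{T}$, which project to the symplectic shears $\left(\begin{smallmatrix} I & 0\\ P & I\end{smallmatrix}\right)$; and the dilations $f\mapsto|\det L|^{1/2}f(Lx)$ with $L\in GL(n,\mathbb{R})$, which project to $\left(\begin{smallmatrix} L^{-1} & 0\\ 0 & L^{T}\end{smallmatrix}\right)$ --- in each of which cases the identity follows from a single change of variables (a Gaussian integration for $\mathcal{F}$), and then to propagate it to an arbitrary product by means of $W(\widehat{S}_{1}\widehat{S}_{2}f)(z)=W(\widehat{S}_{2}f)(S_{1}^{-1}z)=Wf((S_{1}S_{2})^{-1}z)$. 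Either way, since $Wf$ does not see the phase (nor the sign) of $f$, the resulting statement automatically depends on $\widehat{S}$ only through $S=\pi^{\mathrm{Mp}}(\widehat{S})$, as it must.
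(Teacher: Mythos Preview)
Your proof is correct. Note, however, that the paper does not actually supply a proof of this theorem: it is stated as a known result with a reference to \cite[p.~151]{Birkbis}. Your argument via the Grossmann--Royer reflection operators $\widehat{T}_{\mathrm{GR}}(z_{0})=\widehat{T}(z_{0})\Pi\widehat{T}(z_{0})^{-1}$ is in fact the approach favoured in that reference (and in de Gosson's earlier \cite{Birk}), so you have essentially reconstructed the proof the paper defers to. The two ingredients you isolate --- the intertwining relation $\widehat{S}\,\widehat{T}(z_{0})\,\widehat{S}^{-1}=\widehat{T}(Sz_{0})$ and the centrality of $\Pi$ in $\mathrm{Mp}(n,\mathbb{R})$ --- are exactly what is used there. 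The alternative verification on generators that you sketch at the end is also standard (it is the route taken in \cite{folland}), and your remark that the identity propagates multiplicatively and is insensitive to the sign ambiguity in $\widehat{S}$ is the right way to close that argument.
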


The Born-Jordan distribution, first introduced in \cite{Cohen1}, is defined
by 
\begin{equation}  \label{bj}
Q_{\mathrm{BJ}} f= Wf \ast (\theta_{\mathrm{BJ}})_{\sigma}
\end{equation}
where $\theta_{\mathrm{BJ}}$ is Cohen's kernel function defined by 
\begin{equation}  \label{e4}
\theta_{\mathrm{BJ}}(x,p)=\func{sinc}\left( \frac{px}{2\hbar}\right).
\end{equation}
(Recall that the function $\func{sinc}$ is defined by $\func{sinc}u=\sin u/u$
for $u\neq0$ and $\func{sinc}0=1$.)

\subsection{Weyl and Born-Jordan pseudodifferential operators}

The Weyl operator $\widehat{A}_{\mathrm{W}}=\limfunc{Op}_{\mathrm{W}}(a)$
with symbol $a\in\mathcal{S}^{\prime }({\mathbb{R}^{2n}})$ is given by the
familiar formula%
\begin{equation}
\widehat{A}_{\mathrm{W}}=\left( \tfrac{1}{2\pi\hbar}\right) ^{n}\int
a_{\sigma}(z)\widehat{T}(z)dz,  \label{AW1}
\end{equation}
where $a_{\sigma}$ is the symplectic Fourier transform of $a$ defined in %
\eqref{sympft}.

The Weyl transform can also be written in terms of the Wigner distribution
by the formula 
\begin{equation*}
\langle \widehat{A}_{\mathrm{W}} f,g\rangle=\langle a, W(g,f)\rangle.
\end{equation*}
The Born-Jordan operator $\widehat{A}_{\mathrm{BJ}}$ with (Born-Jordan)
symbol $a\in\mathcal{S}^{\prime }({\mathbb{R}^{2n}})$ is given by 
\begin{equation*}
\widehat{A}_{\mathrm{BJ}}=\mathrm{Op}_{\mathrm{BJ}}(a)=\limfunc{Op}%
\nolimits_{\mathrm{W}}(a\ast(\theta_{\mathrm{BJ}})_{\sigma}),
\end{equation*}
and represents an extension to arbitrary symbols of the first quantization
rule introduced in the literature \cite{bj} in the case of polynomial
symbols. Indeed {for the case} of monomial symbols one proves
(de Gosson \cite{TRANSAM}, de Gosson and Luef \cite{golu1}) the following
result.

\begin{proposition}
Let $a\in\mathcal{S}^{\prime}(\mathbb{R}^{2n})$ be a symbol. The restriction
of the linear operator $\widehat{A}_{\mathrm{BJ}}$ to monomials $p^{s}x^{r}$
(for $n=1)$ is given by the Born--Jordan rule 
\begin{equation}
p^{s}x^{r}\overset{\mathrm{BJ}}{\longrightarrow}\frac{1}{s+1}%
\sum_{\ell=0}^{s}\widehat{p}^{s-\ell}\widehat{x}^{r}\widehat{p}^{\ell} .
\label{bj1}
\end{equation}
\end{proposition}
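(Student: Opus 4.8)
The plan is to realise the Born--Jordan operator as the \emph{uniform average}, over the ordering parameter $\tau\in[0,1]$, of the Shubin $\tau$-pseudodifferential operators, and then to compute that average monomial by monomial. Recall that for $\tau\in[0,1]$ one puts
\begin{equation*}
\mathrm{Op}_{\tau}(a)f(x)=\Big(\tfrac{1}{2\pi\hbar}\Big)^{n}\iint e^{\frac{i}{\hbar}(x-y)\xi}\,a\big((1-\tau)x+\tau y,\xi\big)\,f(y)\,dy\,d\xi ,
\end{equation*}
so that $\tau=\tfrac12$ gives the Weyl operator $\widehat{A}_{\mathrm{W}}=\mathrm{Op}_{\mathrm{W}}(a)$, while on a monomial $p^{s}x^{r}$ the extreme values $\tau=0$, $\tau=1$ yield the standard and anti-standard orderings $\widehat{x}^{\,r}\widehat{p}^{\,s}$, $\widehat{p}^{\,s}\widehat{x}^{\,r}$. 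The first step is the identity $\mathrm{Op}_{\mathrm{BJ}}(a)=\int_{0}^{1}\mathrm{Op}_{\tau}(a)\,d\tau$. To prove it I would use the well known description of the $\tau$-calculus as a Fourier-multiplier deformation of the Weyl calculus (de Gosson \cite{TRANSAM,Springer}): in symplectic-Fourier variables, $\mathrm{Op}_{\tau}$ is obtained from $\mathrm{Op}_{\mathrm{W}}$ by multiplying the symplectic Fourier transform of the symbol by the chirp $e^{\frac{i}{\hbar}(\tau-\frac12)px}$, whereas $\mathrm{Op}_{\mathrm{BJ}}$ is obtained from $\mathrm{Op}_{\mathrm{W}}$ by multiplying it by $\theta_{\mathrm{BJ}}(x,p)=\mathrm{sinc}\big(\tfrac{px}{2\hbar}\big)$ --- the latter being merely a restatement of $\mathrm{Op}_{\mathrm{BJ}}(a)=\mathrm{Op}_{\mathrm{W}}\big(a\ast(\theta_{\mathrm{BJ}})_{\sigma}\big)$ together with $\mathcal{F}_{\sigma}^{2}=\mathrm{Id}$. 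Since $\int_{0}^{1}e^{\frac{i}{\hbar}(\tau-\frac12)px}\,d\tau=\mathrm{sinc}\big(\tfrac{px}{2\hbar}\big)$, averaging the $\tau$-multipliers produces exactly the Born--Jordan multiplier, and the identity follows by linearity.

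The second step is the explicit $\tau$-quantization of a monomial (here $n=1$). Substituting $a(y,\xi)=\xi^{s}y^{r}$ into the integral above, using $\xi^{s}e^{\frac{i}{\hbar}(x-y)\xi}=(i\hbar)^{s}\partial_{y}^{s}e^{\frac{i}{\hbar}(x-y)\xi}$, integrating by parts $s$ times and expanding by Leibniz' rule --- equivalently, invoking the classical symbol-ordering identities, see de Gosson \cite{TRANSAM} and de Gosson--Luef \cite{golu1} --- one obtains
\begin{equation*}
\mathrm{Op}_{\tau}(p^{s}x^{r})=\sum_{\ell=0}^{s}\binom{s}{\ell}\tau^{s-\ell}(1-\tau)^{\ell}\,\widehat{p}^{\,s-\ell}\widehat{x}^{\,r}\widehat{p}^{\,\ell},
\end{equation*}
which may be double-checked by specialising: $\tau=\tfrac12$ recovers McCoy's symmetrised Weyl ordering $2^{-s}\sum_{\ell}\binom{s}{\ell}\widehat{p}^{\,s-\ell}\widehat{x}^{\,r}\widehat{p}^{\,\ell}$, while $\tau=0,1$ collapse to the single terms noted above. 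It then remains to integrate in $\tau$; by the Euler beta integral,
\begin{equation*}
\binom{s}{\ell}\int_{0}^{1}\tau^{s-\ell}(1-\tau)^{\ell}\,d\tau=\binom{s}{\ell}\,\frac{(s-\ell)!\,\ell!}{(s+1)!}=\frac{1}{s+1},
\end{equation*}
independently of $\ell$, so all the binomial weights collapse to $1/(s+1)$ and, combining with the first step,
\begin{equation*}
\mathrm{Op}_{\mathrm{BJ}}(p^{s}x^{r})=\int_{0}^{1}\mathrm{Op}_{\tau}(p^{s}x^{r})\,d\tau=\frac{1}{s+1}\sum_{\ell=0}^{s}\widehat{p}^{\,s-\ell}\widehat{x}^{\,r}\widehat{p}^{\,\ell},
\end{equation*}
which is precisely \eqref{bj1}.

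I expect the only genuine difficulty to lie in the bookkeeping of these two technical inputs, not in any conceptual point. In the first step one must carry the sign and scaling conventions fixed in \eqref{sympft}--\eqref{FT} carefully through the passage from $\mathrm{Op}_{\tau}$ to its Weyl symbol, so as to land on exactly the sinc kernel of \eqref{e4} and not a rescaled variant of it; in the second step the integration by parts and the Leibniz expansion must be performed while scrupulously preserving the order of the non-commuting operators $\widehat{x}$ and $\widehat{p}$. Both computations are classical and essentially contained in the cited references, from which the proposition may in fact be read off directly.
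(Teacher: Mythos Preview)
Your argument is correct. The paper does not supply an in-text proof of this proposition at all: it simply states the result and cites de~Gosson \cite{TRANSAM} and de~Gosson--Luef \cite{golu1}, where exactly the two ingredients you use --- the realisation of $\mathrm{Op}_{\mathrm{BJ}}$ as the average $\int_{0}^{1}\mathrm{Op}_{\tau}\,d\tau$ of the Shubin $\tau$-calculi, and the explicit ordering formula for $\mathrm{Op}_{\tau}(p^{s}x^{r})$ followed by the beta-integral collapse --- are worked out. Your sketch is therefore not an alternative route but a faithful reconstruction of the cited proof, and your own closing remark (``essentially contained in the cited references'') is accurate.
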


In general it follows from formula (\ref{AW1}) that $\widehat{A}_{\mathrm{BJ}%
}$ is alternatively given by 
\begin{equation*}
\widehat{A}_{\mathrm{BJ}}=\left( \tfrac{1}{2\pi\hbar}\right) ^{n}\int
a_{\sigma}(z)\theta_{\mathrm{BJ}} (z)\widehat{T}(z)dz.
\end{equation*}

In \cite{cogoni15} we have proven that every Weyl operator has a
Born--Jordan symbol; equivalently, every linear continuous operator $%
\widehat{A}:\mathcal{S}(\mathbb{R}^{n})\longrightarrow\mathcal{S}^{\prime}(%
\mathbb{R}^{n})$ is a Born--Jordan pseudodifferential operator $\widehat{A}_{%
\mathrm{BJ}}=\limfunc{Op}_{\mathrm{BJ}}(b)$ for some symbol $b\in\mathcal{S}%
^{\prime}(\mathbb{R}^{2n})$. The proof of this property is far from being
trivial, since it amounts to solving a division problem: in view of
Schwartz's kernel theorem (see \textit{e.g}. H\"{o}rmander \cite{69}) every
such operator $\widehat{A}$ can be written as $\widehat{A}=\limfunc{Op}%
\nolimits_{\mathrm{W}}(a)$ for some $a\in\mathcal{S}^{\prime}(\mathbb{R}%
^{2n})$: it suffices to take 
\begin{equation}
a(x,p)=\int e^{-\frac{i}{\hbar}py}K(x+\tfrac{1}{2}y,x-\tfrac{1}{2}y)d^{n}y
\label{AK9}
\end{equation}
where $K$ is the kernel of $\widehat{A}$. Now, if we also want to show that
there exists a symbol $b$ such that $\widehat{A}_{\mathrm{BJ}}=%
\limfunc{Op}_{\mathrm{BJ}}(b)$ then, since we have by definition $\widehat{A}%
_{\mathrm{BJ}}=\limfunc{Op}\nolimits_{\mathrm{W}}(b\ast(\theta_{\mathrm{BJ}%
})_\sigma)$, we have to solve the equation $b\ast(\theta_{\mathrm{BJ}%
})_\sigma=a$, that is, taking symplectic Fourier transforms,%
\begin{equation*}
b_{\sigma}(z)\func{sinc}\left( \frac{px}{2\hbar}\right) =a_{\sigma }(z).
\end{equation*}
We are thus confronted with a division problem, the difficulty coming from
the fact that we have $\func{sinc}\left( px/2\hbar\right) =0$ for all $(x,p)$
such that $px=2N\pi\hbar$ for a non-zero integer $N$. Nevertheless, one
proves (\cite{cogoni15}) that such a division is always possible in $%
\mathcal{S}^{\prime }({\mathbb{R}^{2n}})$.

\section{Squeezed states and interferences}

We collect in this section some material about the (cross-)Wigner transforms
of generalized Gaussian functions (the \textquotedblleft squeezed
states\textquotedblright\ familiar from quantum optics), and their
translates. For details see e.g. de Gosson \cite{Birkbis} and the references
therein. We then illustrate the phenomenon of the interferences for the sum
of four Gaussians in the diamond configuration, both for the Wigner and
Born-Jordan distribution.

\subsection{Generalized Gaussians and their Wigner transforms}

We will use the following well-know generalized Fresnel formula giving the
Fourier transform of Gaussians:

\begin{lemma}
Let $\phi_{M}(x)=e^{-\tfrac{1}{2\hbar}Mx^{2}}$ where $M=X+iY$ is a symmetric
complex $n\times n$ matrix such that $X=\func{Re}M>0$. We have%
\begin{equation}
\cF\phi_{M}(x)=(\det M)^{-1/2}\phi_{M^{-1}}(x)  \label{fofol8}
\end{equation}
with $\cF$ is defined in \eqref{FT},
where $(\det M)^{-1/2}$ is given by the formula%
\begin{equation*}
(\det M)^{-1/2}=\lambda_{1}^{-1/2}\cdot\cdot\cdot\lambda_{m}^{-1/2}
\end{equation*}
the numbers $\lambda_{1}^{-1/2},...,\lambda_{m}^{-1/2}$ being the square
roots with positive real parts of the eigenvalues $\lambda_{1}^{-1},...,%
\lambda _{m}^{-1}$ of $M^{-1}$.
\end{lemma}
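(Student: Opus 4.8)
The plan is to establish the formula first for real positive-definite $M$, where it is the classical Gaussian integral, and then to propagate it to all admissible complex $M$ by analytic continuation. Fix $x\in\mathbb{R}^{n}$ and view both sides as functions of $M$ on the domain $\mathcal{D}=\{M=M^{T}\in\mathbb{C}^{n\times n}:\operatorname{Re}M>0\}$; identifying $M$ with the tuple of entries $(m_{ij})_{i\le j}\in\mathbb{C}^{n(n+1)/2}$, the set $\mathcal{D}$ is convex, hence connected and simply connected. For $M\in\mathcal{D}$ one has $\phi_{M},\phi_{M^{-1}}\in\mathcal{S}(\mathbb{R}^{n})$: writing $M=X^{1/2}(I+iK)X^{1/2}$ with $X=\operatorname{Re}M>0$ and $K=X^{-1/2}(\operatorname{Im}M)X^{-1/2}$ real symmetric, one gets $M^{-1}=X^{-1/2}(I+iK)^{-1}X^{-1/2}$ with $\operatorname{Re}(I+iK)^{-1}=(I+K^{2})^{-1}>0$, so $\operatorname{Re}M^{-1}>0$ as well; in particular the integral in \eqref{FT} defining $\cF\phi_{M}$ converges absolutely, so the statement makes sense.

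First I would fix the branch of $(\det M)^{-1/2}$. Since $M$ is complex symmetric, its Hermitian part is $\operatorname{Re}M>0$, so the spectrum of $M$ — and hence that of $M^{-1}$ — lies in the open right half-plane, in particular in $\mathbb{C}\setminus(-\infty,0]$. One may therefore set $(\det M)^{-1/2}:=\exp\!\big(-\tfrac12\operatorname{tr}\operatorname{Log}M\big)$ using the holomorphic functional calculus with the principal logarithm; this is holomorphic on $\mathcal{D}$, it equals $\prod_{j}\lambda_{j}^{-1/2}$ where $\lambda_{j}^{-1/2}$ are the square roots with positive real part of the eigenvalues $\lambda_{j}^{-1}$ of $M^{-1}$ (as in the statement), and on the real positive-definite cone it reduces to the usual positive square root of $1/\det M$. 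Next, for $M>0$ real I would orthogonally diagonalize $M=O^{T}DO$, $D=\operatorname{diag}(d_{1},\dots,d_{n})$ with $d_{j}>0$, substitute $y\mapsto O^{T}y$ in \eqref{FT}, and thereby split the integral into the $n$ one-dimensional Gaussian integrals $\int_{\mathbb{R}}e^{-\frac{i}{\hbar}uv}e^{-\frac{d_{j}}{2\hbar}v^{2}}\,dv=\sqrt{2\pi\hbar/d_{j}}\,e^{-\frac{1}{2\hbar d_{j}}u^{2}}$ (completion of the square plus a contour shift, legitimate because $d_{j}>0$). Reassembling and undoing the rotation gives $\cF\phi_{M}(x)=(\det D)^{-1/2}\phi_{D^{-1}}(Ox)=(\det M)^{-1/2}\phi_{M^{-1}}(x)$, which is the asserted identity on the real cone.

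To finish, I would check that both $M\mapsto\cF\phi_{M}(x)$ and $M\mapsto(\det M)^{-1/2}\phi_{M^{-1}}(x)$ are holomorphic on $\mathcal{D}$ — for the former by differentiating under the integral sign, using that on a compact subset of $\mathcal{D}$ one has $\operatorname{Re}M\ge\delta I$ and hence $|\partial_{m_{ij}}e^{-\frac{1}{2\hbar}My\cdot y}|\le C(1+|y|^{2})e^{-\frac{\delta}{2\hbar}|y|^{2}}$ uniformly; for the latter by combining the first observation with the holomorphy of $M\mapsto M^{-1}$ and of $N\mapsto\phi_{N}(x)$. Since, by the previous step, the two holomorphic functions agree on $\mathcal{D}\cap\mathbb{R}^{n(n+1)/2}$, a nonempty open subset of the totally real subspace $\mathbb{R}^{n(n+1)/2}\subset\mathbb{C}^{n(n+1)/2}$, the identity theorem for holomorphic functions of several complex variables on the connected set $\mathcal{D}$ forces them to coincide on all of $\mathcal{D}$, which is the claim.

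The only genuinely delicate point is the bookkeeping around $(\det M)^{-1/2}$: one must be sure that the square roots of the eigenvalues of $M^{-1}$ picked out by the ``positive real part'' rule assemble into a \emph{single-valued holomorphic} function on $\mathcal{D}$ that restricts to the positive square root over the real cone — which is exactly what the functional-calculus description guarantees — and, relatedly, that the real positive-definite matrices form a set of uniqueness for holomorphic functions on $\mathcal{D}$. The Gaussian computation of the second paragraph and the differentiation-under-the-integral estimates are routine.
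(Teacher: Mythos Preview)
Your argument is correct and is the standard route to the generalized Fresnel formula: reduce to the real positive-definite case by orthogonal diagonalization and one-dimensional Gaussian integrals, then analytically continue in the entries of $M$ over the convex domain $\{M=M^{T}:\operatorname{Re}M>0\}$, keeping track of the branch of $(\det M)^{-1/2}$ via the holomorphic functional calculus. The spectral inclusion in the right half-plane (hence well-definedness of the principal $\operatorname{Log}M$), the positivity of $\operatorname{Re}M^{-1}$, and the uniqueness step on the totally real slice are all handled correctly.

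There is, however, nothing to compare against: the paper does not supply a proof of this lemma. It is introduced as ``the following well-known generalized Fresnel formula'' and merely stated, then used as a tool in the computation of $W\psi_{M}^{\hbar}$ and of $W(\psi_{M}^{\hbar},\psi_{M'}^{\hbar})$. So your write-up stands on its own and would serve as a self-contained justification where the paper simply cites the result.
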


From now on we denote by $\psi_{M}^{\hbar}$ the normalized Gaussian function
defined by 
\begin{equation}
\psi_{M}^{\hbar}(x)=\left( \tfrac{1}{\pi\hbar}\right) ^{n/4}(\det
X)^{1/4}e^{-\tfrac{1}{2\hbar}Mx^{2}}  \label{gausshud}
\end{equation}
where $M$ is as above. Gaussians of this type are often called
\textquotedblleft squeezed coherent states\textquotedblright; the reason is
that they can be obtained from the standard coherent state $\psi_{0}^{\hbar
}(x)=(\pi\hbar)^{-n/4}e^{-|x|^{2}/2\hbar}$ using a metaplectic
transformation.

The following result shows that $W\psi_{M}^{\hbar}$ is in fact a phase space
Gaussian of a very special type:

\begin{proposition}
\label{wiggauss}Let $M=X+iY$ and $\psi_{M}^{\hbar}$ be defined as above.

(i) The Wigner transform of the squeezed state $\psi_{M}^{\hbar}$ is the
phase space Gaussian 
\begin{equation}
W\psi_{M}^{\hbar}(z)=\left( \tfrac{1}{\pi\hbar}\right) ^{n}e^{-\tfrac {1}{%
\hbar}Gz^{2}}  \label{phagauss}
\end{equation}
where $G$ is the symmetric matrix%
\begin{equation}
G=%
\begin{pmatrix}
X+YX^{-1}Y & YX^{-1} \\ 
X^{-1}Y & X^{-1}%
\end{pmatrix}
;  \label{gsym}
\end{equation}
(ii) We have $G\in\limfunc{Sp}(n)$; in fact $G=S^{T}S$ where%
\begin{equation}
S=%
\begin{pmatrix}
X^{1/2} & 0 \\ 
X^{-1/2}Y & X^{-1/2}%
\end{pmatrix}
\label{bi}
\end{equation}
is a symplectic matrix.
\end{proposition}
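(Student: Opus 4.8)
The plan is to prove (i) by a direct Gaussian integral and (ii) by an explicit block-matrix verification; neither step hides a real difficulty.

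\emph{Part (i).} Write $\psi_{M}^{\hbar}(x)=c\,e^{-\frac{1}{2\hbar}Mx^{2}}$ with $c=(\pi\hbar)^{-n/4}(\det X)^{1/4}\in\mathbb{R}$, so that $\overline{\psi_{M}^{\hbar}(x)}=c\,e^{-\frac{1}{2\hbar}\bar Mx^{2}}$, $\bar M=X-iY$, and substitute into the defining integral \eqref{wimo} for $W(f,f)$. Expanding the quadratic forms $M(x+\tfrac12 y)^{2}$ and $\bar M(x-\tfrac12 y)^{2}$ and using that $M$ (hence $X$ and $Y$) is symmetric, the combined exponent of the integrand collapses to $-\frac{1}{\hbar}x^{T}Xx-\frac{1}{4\hbar}y^{T}Xy-\frac{i}{\hbar}(Yx+p)\cdot y$. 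The factor $e^{-\frac{1}{\hbar}x^{T}Xx}$ comes out of the $y$-integral, and the remaining integral $\int e^{-\frac{1}{4\hbar}y^{T}Xy-\frac{i}{\hbar}(Yx+p)\cdot y}\,dy$ is exactly of the type covered by the Lemma above with the real positive-definite matrix $\tfrac12 X$ in place of $M$ (this is the only point where the hypothesis $X>0$ is used, and it guarantees convergence); it equals $(4\pi\hbar)^{n/2}(\det X)^{-1/2}\,e^{-\frac{1}{\hbar}(Yx+p)^{T}X^{-1}(Yx+p)}$. One then checks that the constants collapse, $(\tfrac{1}{2\pi\hbar})^{n}c^{2}(4\pi\hbar)^{n/2}(\det X)^{-1/2}=(\pi\hbar)^{-n}$, and that the exponent $-\frac{1}{\hbar}\big(x^{T}Xx+(Yx+p)^{T}X^{-1}(Yx+p)\big)$ equals $-\frac{1}{\hbar}z^{T}Gz$ with $z=(x,p)$ and $G$ the matrix \eqref{gsym}; in the last identity one uses the symmetry of $X,Y,X^{-1}$ to merge the two cross terms into $2x^{T}YX^{-1}p$, and must not simplify further since $X$ and $Y$ need not commute. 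This yields \eqref{phagauss}.

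\emph{Part (ii).} First verify $G=S^{T}S$ directly. With $S$ as in \eqref{bi}, $S^{T}$ has first row $(X^{1/2},YX^{-1/2})$ and second row $(0,X^{-1/2})$, where $X^{1/2},X^{-1/2}$ are the symmetric positive definite square roots of $X$ and $X^{-1}$; multiplying out, the four blocks of $S^{T}S$ are $X^{1/2}X^{1/2}+YX^{-1/2}X^{-1/2}Y=X+YX^{-1}Y$, then $YX^{-1/2}X^{-1/2}=YX^{-1}$, then $X^{-1/2}X^{-1/2}Y=X^{-1}Y$, then $X^{-1/2}X^{-1/2}=X^{-1}$, i.e.\ precisely the blocks of $G$ in \eqref{gsym}. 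Next, $S\in\mathrm{Sp}(n)$ by the standard block criterion: for $S$ with blocks $A,B,C,D$ one needs $A^{T}C$ and $B^{T}D$ symmetric and $A^{T}D-C^{T}B=I$; here $A^{T}C=X^{1/2}X^{-1/2}Y=Y$ is symmetric, $B^{T}D=0$, and $A^{T}D-C^{T}B=X^{1/2}X^{-1/2}-0=I$. Since $\mathrm{Sp}(n)$ is a group stable under transposition, $S^{T}\in\mathrm{Sp}(n)$, hence $G=S^{T}S\in\mathrm{Sp}(n)$.

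Conceptually, (i)--(ii) reflect the fact that $\psi_{M}^{\hbar}=\widehat{S_{0}}\,\psi_{0}^{\hbar}$ for a suitable $\widehat{S_{0}}\in\mathrm{Mp}(n,\mathbb{R})$ (the very reason these are called squeezed coherent states): combining this with the symplectic covariance of the Wigner transform (Theorem \ref{teo22}) and $W\psi_{0}^{\hbar}(z)=(\pi\hbar)^{-n}e^{-|z|^{2}/\hbar}$ gives $W\psi_{M}^{\hbar}(z)=(\pi\hbar)^{-n}e^{-\frac{1}{\hbar}z^{T}(S_{0}S_{0}^{T})^{-1}z}$, so that $G=(S_{0}S_{0}^{T})^{-1}$ is automatically symmetric and symplectic. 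I would nonetheless run the direct computation, since matching the explicit formulas \eqref{gsym}--\eqref{bi} is then immediate rather than requiring one to first pin down $\widehat{S_{0}}$. The only place calling for care in the whole argument is the constant-chasing in the Gaussian integral and the non-commutativity of $X$ and $Y$; there is no genuine obstacle.
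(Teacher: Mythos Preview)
Your proof is correct and follows essentially the same route as the paper: for (i) you expand the exponent, apply the Fresnel lemma with $\tfrac12 X$, and collect constants exactly as the paper does, and for (ii) your block-criterion verification is just the expansion of the paper's one-line check $S^{T}JS=J$. The closing conceptual remark about $\psi_{M}^{\hbar}=\widehat{S_{0}}\psi_{0}^{\hbar}$ and symplectic covariance is a nice gloss (the paper alludes to it in the text preceding the proposition) but, as you note, is not needed for the proof.
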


\begin{proof}[\textit{Proof of (i)}]
Set $C(X)=\left( \pi\hbar\right) ^{n/4}(\det X)^{1/4}$. By definition of the
Wigner transform we have%
\begin{equation}
W\psi_{M}^{\hbar}(z)=\left( \tfrac{1}{2\pi\hbar}\right) ^{n}C(X)^{2}\int_{%
\mathbb{R}^{n}}e^{-\frac{i}{\hbar}p\cdot y}e^{-\frac{1}{2\hbar}F(x,y)}dy
\label{wigcoh11}
\end{equation}
where the phase $F$ is defined by%
\begin{align*}
F(x,y) & =(X+iY)(x+\tfrac{1}{2}y)^{2}+(X-iY)(x-\tfrac{1}{2}y)^{2} \\
& =2Xx\cdot x+2iYx\cdot y+\tfrac{1}{2}Xy\cdot y
\end{align*}
and hence%
\begin{equation*}
W\psi_{M}^{\hbar}(z)=\left( \tfrac{1}{2\pi\hbar}\right) ^{n}e^{-\frac {1}{%
\hbar}Xx^{2}}C(X)^{2}\int_{\mathbb{R}^{n}}e^{-\frac{i}{\hbar}(p+Yx)\cdot
y}e^{-\frac{1}{4\hbar}Xy^{2}}dy\text{.}
\end{equation*}
Using the Fourier transformation formula (\ref{fofol8}) above with $x$
replaced by $p+Yx$ and $M$ by $\frac{1}{2}X$ we get 
\begin{multline*}
\int_{\mathbb{R}^{n}}e^{-\frac{i}{\hbar}(p+Yx)\cdot y}e^{-\frac{1}{4\hbar }%
Xy\cdot y}dy=(2\pi\hbar)^{n/2}\left[ \det(\tfrac{1}{2}X)\right] ^{-1/2} \\
\times C(X)^{2}\exp\left[ -\tfrac{1}{\hbar}X^{-1}(p+Yx)\cdot(p+Yx)\right] .
\end{multline*}
On the other hand we have 
\begin{equation*}
(2\pi\hbar)^{n/2}\left[ \det(\tfrac{1}{2}X)\right] ^{-1/2}C(X)^{2}=\left( 
\tfrac{1}{\pi\hbar}\right) ^{n}
\end{equation*}
and hence%
\begin{equation*}
W\psi_{M}^{\hbar}(z)=\left( \tfrac{1}{\pi\hbar}\right) ^{n}e^{-\tfrac {1}{%
\hbar}Gz^{2}}
\end{equation*}
where 
\begin{equation*}
Gz^{2}=(X+YX^{-1})x\cdot x+2X^{-1}Yx\cdot p+X^{-1}p\cdot p\text{.}
\end{equation*}
\textit{Proof of (ii). }The symmetry of $G$ is of obvious, and so is the
factorization $G=S^{T}S$. One immediately verifies that $S^{T}JS=J$ hence $%
S\in\limfunc{Sp}(n,\mathbb{R})$ as claimed.
\end{proof}

In particular, when $\psi_{0}^{\hslash}$ is the standard coherent state one
recovers the standard formula%
\begin{equation}
W\psi_{0}^{\hslash}(z)=\left( \tfrac{1}{\pi\hbar}\right) ^{n}e^{-\frac {1}{%
\hbar}|z|^{2}}.  \label{wii8}
\end{equation}

\subsection{The cross-Wigner transform of a pair of Gaussians}

Let us now generalize formula (\ref{phagauss}) by calculating the
cross-Wigner transform $W(\psi_{M}^{\hbar},\psi_{M^{\prime}}^{\hbar})$ of a
pair of Gaussians of the type above.

Let $M$ be a complex matrix; We will denote by $\overline{M}$ its complex
conjugate: if $M=(m_{i,j})_{1\leq i,j\leq n}$ then $\overline{M}%
=(m_{i,j}^{\ast})_{1\leq i,j\leq n}$.

\begin{proposition}
Let $\psi_{M}^{\hbar}$ and $\psi_{M^{\prime}}^{\hbar}$ be Gaussian functions
of the type (\ref{gausshud}). We have%
\begin{equation}
W(\psi_{M}^{\hbar},\psi_{M^{\prime}}^{\hbar})(z)=\left( \tfrac{1}{\pi\hbar }%
\right) ^{n}C_{M,M^{\prime}}e^{-\frac{1}{\hbar}Fz^{2}}  \label{wiwi8}
\end{equation}
where $C_{M,M^{\prime}}$ is a constant given by 
\begin{equation}
C_{M,M^{\prime}}=(\det XX^{\prime})^{1/4}\det\left[ \tfrac{1}{2}(M+\overline{%
M^{\prime}})\right] ^{-1/2}  \label{constant8}
\end{equation}
and $F$ is the symmetric complex matrix given by%
\begin{equation}
F=%
\begin{pmatrix}
2\overline{M^{\prime}}(M+\overline{M^{\prime}})^{-1}M & -i(M-\overline {%
M^{\prime}})(M+\overline{M^{\prime}})^{-1} \\ 
-i(M+\overline{M^{\prime}})^{-1}(M-\overline{M^{\prime}}) & 2(M+\overline {%
M^{\prime}})^{-1}%
\end{pmatrix}
.  \label{wigf8}
\end{equation}
\end{proposition}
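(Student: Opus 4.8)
The plan is to compute $W(\psi_{M}^{\hbar},\psi_{M'}^{\hbar})$ directly from the definition \eqref{wimo}, exactly as in the proof of Proposition \ref{wiggauss}(i). Substituting the Gaussians \eqref{gausshud} and using that the normalizing factors $\left(\tfrac{1}{\pi\hbar}\right)^{n/4}(\det X)^{1/4}$ and $\left(\tfrac{1}{\pi\hbar}\right)^{n/4}(\det X')^{1/4}$ are real, one gets
\begin{equation*}
W(\psi_{M}^{\hbar},\psi_{M'}^{\hbar})(z)=\left(\tfrac{1}{2\pi\hbar}\right)^{n}\left(\tfrac{1}{\pi\hbar}\right)^{n/2}(\det XX')^{1/4}\int_{\mathbb{R}^{n}}e^{-\frac{i}{\hbar}p\cdot y}\,e^{-\frac{1}{2\hbar}\Phi(x,y)}\,dy ,
\end{equation*}
where $\Phi(x,y)=M(x+\tfrac12 y)^{2}+\overline{M'}(x-\tfrac12 y)^{2}$. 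Expanding the quadratic form and using the symmetry of $M$ and $M'$ gives
\begin{equation*}
\Phi(x,y)=(M+\overline{M'})x\cdot x+(M-\overline{M'})x\cdot y+\tfrac14(M+\overline{M'})y\cdot y .
\end{equation*}

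First I would pull out the factor $e^{-\frac{1}{2\hbar}(M+\overline{M'})x\cdot x}$ and absorb the term linear in $y$ into the exponent $-\tfrac{i}{\hbar}p\cdot y$ by the complex shift $p\mapsto p-\tfrac{i}{2}(M-\overline{M'})x$; the remaining integral in $y$ is then of exactly the type handled by the generalized Fresnel formula \eqref{fofol8} (in Proposition \ref{wiggauss} that formula was applied with $M$ replaced by $\tfrac12 X$), here with $\tfrac14(M+\overline{M'})$ in the role of $M$. The hypotheses $X=\operatorname{Re}M>0$, $X'=\operatorname{Re}M'>0$ give $\operatorname{Re}(M+\overline{M'})=X+X'>0$, so $M+\overline{M'}$ is invertible and \eqref{fofol8} applies, the branch of $\det[\tfrac12(M+\overline{M'})]^{-1/2}$ being the one it prescribes. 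Collecting the numerical constants yields exactly $\left(\tfrac{1}{\pi\hbar}\right)^{n}(\det XX')^{1/4}\det[\tfrac12(M+\overline{M'})]^{-1/2}=\left(\tfrac{1}{\pi\hbar}\right)^{n}C_{M,M'}$, with $C_{M,M'}$ as in \eqref{constant8}, and the $x$- and $p$-dependence combine into a single Gaussian $e^{-\frac{1}{\hbar}Fz^{2}}$ with $F$ symmetric complex, whose blocks arise from the completion of the square.

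It then remains to identify $F$ with the matrix \eqref{wigf8}. Writing $N=\tfrac12(M+\overline{M'})$ and $L=M-\overline{M'}$, the completion of the square together with the symmetry of $M,M'$ (hence of $N,L$) gives $F_{22}=N^{-1}=2(M+\overline{M'})^{-1}$, $2F_{21}=-iN^{-1}L=-2i(M+\overline{M'})^{-1}(M-\overline{M'})$, and $F_{11}=N-\tfrac14 LN^{-1}L$. The only step that is not routine Gaussian bookkeeping is the algebraic identity
\begin{equation*}
\tfrac12(M+\overline{M'})-\tfrac12(M-\overline{M'})(M+\overline{M'})^{-1}(M-\overline{M'})=2\overline{M'}(M+\overline{M'})^{-1}M ,
\end{equation*}
which must be proved \emph{without} assuming that $M$ and $\overline{M'}$ commute. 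I would establish it by inserting $M+\overline{M'}=(M-\overline{M'})+2\overline{M'}$ (equivalently $=2M-(M-\overline{M'})$) and reducing both sides to the elementary identities
\begin{equation*}
M(M+\overline{M'})^{-1}\overline{M'}=\overline{M'}-\overline{M'}(M+\overline{M'})^{-1}\overline{M'}=\overline{M'}(M+\overline{M'})^{-1}M ,
\end{equation*}
the last of which also shows that $F_{11}$, and hence $F$, is symmetric, so that $Fz\cdot z$ is unambiguous and \eqref{wiwi8}--\eqref{wigf8} follow. I expect this matrix identity for the $x\cdot x$-block to be the main obstacle; everything else parallels the computation already carried out for Proposition \ref{wiggauss}.
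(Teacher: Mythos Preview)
Your proposal is correct and follows essentially the same route as the paper's own proof: expand $\Phi(x,y)$, apply the generalized Fresnel formula \eqref{fofol8} with $\tfrac14(M+\overline{M'})$ in place of $M$, collect constants, and then reduce the upper-left block via the algebraic identity you single out, which is exactly the paper's identity \eqref{identity8}. If anything, you supply more detail than the paper does---you justify the applicability of \eqref{fofol8} via $\operatorname{Re}(M+\overline{M'})=X+X'>0$ and sketch a proof of the noncommutative matrix identity, whereas the paper simply asserts it.
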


\begin{proof}
We have%
\begin{equation*}
W(\psi_{M}^{\hbar},\psi_{M^{\prime}}^{\hbar})(z)=C(X,X^{\prime})\int_{%
\mathbb{R}^{n}}e^{-\frac{i}{\hbar}py}e^{-\frac{1}{2\hbar}\Phi(x,y)}dy
\end{equation*}
where the functions $C$ and $\Phi$ are given by%
\begin{align*}
C(X,X^{\prime}) & =2^{-n}\left( \tfrac{1}{\pi\hbar}\right) ^{2n}(\det
XX^{\prime})^{1/4} \\
\Phi(x,y) & =M(x+\tfrac{1}{2}y)^{2}+\overline{M^{\prime}}(x-\tfrac{1}{2}%
y)^{2}\text{.}
\end{align*}
Let us evaluate the integral%
\begin{equation*}
I(z)=\int_{\mathbb{R}^{n}}e^{-\frac{i}{\hbar}py}e^{-\frac{1}{2\hbar}\Phi
(x,y)}dy, \quad z=(x,p)\in \mathbb{R}^{2n}.
\end{equation*}
We have%
\begin{equation*}
\Phi(x,y)=(M+\overline{M^{\prime}})x^{2}+\frac{1}{4}(M+\overline{M^{\prime}}%
)y^{2}+(M-\overline{M^{\prime}})x\cdot y
\end{equation*}
and hence%
\begin{equation*}
I(z)=e^{-\frac{1}{2\hbar}(M+\overline{M^{\prime}})x^{2}}\int_{\mathbb{R}%
^{n}}e^{-\frac{i}{\hbar}[p-\frac{i}{2}(M-\overline{M^{\prime}})x]\cdot y}e^{-%
\frac{1}{8\hbar}(M+\overline{M^{\prime}})y^{2}}dy.
\end{equation*}
Using the Fourier transformation formula (\ref{fofol8}) we get%
\begin{multline}
I(z)=(2\pi\hbar)^{n/2}\det\left[ \tfrac{1}{4}(M+\overline{M^{\prime}})\right]
^{-1/2}  \notag \\
\times\exp\left( -\frac{1}{2\hbar}\left[ (M+\overline{M^{\prime}})x^{2}+4(M+%
\overline{M^{\prime}})^{-1}\left( p-\tfrac{1}{2}(M-\overline {M^{\prime}}%
)x\right) ^{2}\right] \right) .
\end{multline}
A straightforward calculation shows that 
\begin{equation*}
\tfrac{1}{2}(M+\overline{M^{\prime}})x^{2}+4(M+\overline{M^{\prime}}%
)^{-1}\left( p-\tfrac{1}{2}(M-\overline{M^{\prime}})x\right) ^{2}=Fz\cdot z
\end{equation*}
where $F$ is the matrix%
\begin{equation}
\begin{pmatrix}
K & -i(M-\overline{M^{\prime}})(M+\overline{M^{\prime}})^{-1} \\ 
-i(M+\overline{M^{\prime}})^{-1}(M-\overline{M^{\prime}}) & 2(M+\overline {%
M^{\prime}})^{-1}%
\end{pmatrix}
\label{matrixm8}
\end{equation}
with left upper block 
\begin{equation*}
K=\tfrac{1}{2}\left[ M+\overline{M^{\prime}}-(M-\overline{M^{\prime}})(M+%
\overline{M^{\prime}})^{-1}(M-\overline{M^{\prime}})\right] .
\end{equation*}
Using the identity%
\begin{equation}
M+\overline{M^{\prime}}-(M-\overline{M^{\prime}})(M+\overline{M^{\prime}}%
)^{-1}(M-\overline{M^{\prime}})=4\overline{M^{\prime}}(M+\overline{M^{\prime
}})^{-1}M  \label{identity8}
\end{equation}
the matrix (\ref{matrixm8}) is given by (\ref{wigf8}). We thus have,
collecting the constants and simplifying the obtained expression 
\begin{equation*}
W(\psi_{M}^{\hbar},\psi_{M^{\prime}}^{\hbar})(z)=\left( \tfrac{1}{\pi\hbar }%
\right) ^{n}(\det XX^{\prime})^{1/4}\det\left[ \tfrac{1}{2}(M+\overline {%
M^{\prime}})\right] ^{-1/2}e^{-\frac{1}{\hbar}Fz^{2}}
\end{equation*}
which we set out to prove.
\end{proof}

\subsection{Superposition of squeezed coherent states}

We are now ready to use the above machinery to study the superposition of
squeezed coherent states, and the interferences which arise due the
quadratic nature of the Wigner distribution.

Let $f=\sum_{1\leq j\leq m}\lambda_{j}f_{j}$ be a finite linear
superposition of quantum states $f_j\in L^2(\mathbb{R}^n)$; an easy
computation shows that the Wigner distribution $Wf$ is given by%
\begin{equation}  \label{e1}
Wf =\sum_{j=1}^{m}|\lambda_j|^2 Wf_{j}+2\underset{k>\ell}{\func{Re}\sum
_{k=1}^{m}\sum_{\ell=1}^{m}}\lambda_k\bar{\lambda_\ell} W(f_{k},f_{\ell}).
\end{equation}

\begin{corollary}
Let $f=\sum_{1\leq j\leq m}\widehat{T}(z_{j})f_{j}$, $z_j\in {\mathbb{R}^{2n}%
}$, be a finite linear superposition of Heisenberg shifts of quantum states.
Then 
\begin{equation}  \label{e2}
Wf(z)=\sum_{j=1}^{m} Wf_{j}(z-z_j)+2\underset{k>\ell}{\func{Re}\sum
_{k=1}^{m}\sum_{\ell=1}^{m}} e^{-\frac{i}{\hbar} [\sigma(z,z_{k}-z_{\ell})+%
\frac{1}{2}\sigma(z_{k},z_{\ell})]} W(f_{k},f_{\ell})(z-\langle
z_{k,\ell}\rangle),
\end{equation}
where $\langle z_{k,\ell}\rangle:= \frac 12 (z_k+z_\ell)$.
\end{corollary}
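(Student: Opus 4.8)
The plan is to expand $Wf$ by sesquilinearity of the cross-Wigner transform and then apply the translation formula of Proposition \ref{transam} term by term. Writing $f=\sum_{j=1}^m\widehat{T}(z_j)f_j$ and using that $W(\cdot,\cdot)$ is linear in its first argument and conjugate-linear in its second, we get
\begin{equation*}
Wf(z)=W\Big(\sum_{k=1}^m\widehat{T}(z_k)f_k,\sum_{\ell=1}^m\widehat{T}(z_\ell)f_\ell\Big)(z)=\sum_{k=1}^m\sum_{\ell=1}^m W\big(\widehat{T}(z_k)f_k,\widehat{T}(z_\ell)f_\ell\big)(z).
\end{equation*}
First I would isolate the diagonal terms $k=\ell$: in this case $\sigma(z,z_k-z_k)=0$, $\sigma(z_k,z_k)=0$ and $\langle z\rangle=z_k$, so Proposition \ref{transam} (equivalently, formula \eqref{trans1}) gives $W(\widehat{T}(z_k)f_k)(z)=Wf_k(z-z_k)$, which produces the first sum in \eqref{e2}.

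For the off-diagonal part I would apply \eqref{wt} directly with $z_0=z_k$, $z_1=z_\ell$, obtaining for each pair $k\neq\ell$
\begin{equation*}
W\big(\widehat{T}(z_k)f_k,\widehat{T}(z_\ell)f_\ell\big)(z)=e^{-\frac{i}{\hbar}[\sigma(z,z_k-z_\ell)+\frac{1}{2}\sigma(z_k,z_\ell)]}W(f_k,f_\ell)(z-\langle z_{k,\ell}\rangle).
\end{equation*}
The remaining step is to pair the term indexed by $(k,\ell)$ with the one indexed by $(\ell,k)$. Using the antisymmetry of the symplectic form, namely $\sigma(z,z_\ell-z_k)=-\sigma(z,z_k-z_\ell)$ and $\sigma(z_\ell,z_k)=-\sigma(z_k,z_\ell)$, together with $\langle z_{\ell,k}\rangle=\langle z_{k,\ell}\rangle$ and the Hermitian symmetry $W(f_\ell,f_k)=\overline{W(f_k,f_\ell)}$ (which follows at once from the defining integral \eqref{wimo} by the change of variables $y\mapsto-y$), one checks that the $(\ell,k)$-term is exactly the complex conjugate of the $(k,\ell)$-term. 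Hence their sum equals $2\func{Re}$ of the $(k,\ell)$-term, and summing over $k>\ell$ gives the second sum in \eqref{e2}.

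The argument is essentially bookkeeping and I do not expect a genuine obstacle; the only point that needs a little care — and the only place where the precise shape of the phase in Proposition \ref{transam} is used — is verifying that swapping $k\leftrightarrow\ell$ conjugates the phase factor, which is exactly where the antisymmetry of $\sigma$ enters. Everything else reduces to the sesquilinear expansion of the cross-Wigner transform and the elementary evaluation of the phase and of the translation vector $\langle z\rangle$ on the diagonal.
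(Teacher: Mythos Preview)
Your proposal is correct and follows essentially the same route as the paper: the paper's proof is the single line ``a straightforward consequence of \eqref{e1} and \eqref{wt}'', where \eqref{e1} encodes exactly your sesquilinear expansion and the pairing of $(k,\ell)$ with $(\ell,k)$ into a real part, and \eqref{wt} is Proposition~\ref{transam}. Your write-up simply makes explicit the bookkeeping that the paper leaves to the reader.
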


\begin{proof}
The formula \eqref{e2} is a straightforward consequence of \eqref{e1} and %
\eqref{wt}.
\end{proof}

If we consider the standard coherent state $\psi_{0}^{\hbar
}(x)=(\pi\hbar)^{-n/4}e^{-|x|^{2}/2\hbar}$, from formula \eqref{phagauss} we
infer $W\psi_{0}^{\hbar }(z)=\left( \tfrac{1}{\pi\hbar}\right) ^{n}e^{-%
\tfrac {1}{\hbar}z^{2}}$. The superposition of Heisenberg shifts of the
standard coherent state $\psi_{0}^{\hbar }$ is then given by 
\begin{align}  \label{e3}
W&(\sum_{j=1}^{m}\widehat{T}(z_{j})\psi_{0}^{\hbar })(z) \\
&= \left( \tfrac{1}{\pi\hbar}\right) ^{n}\left(\sum_{j=1}^{m} e^{-\tfrac {1}{%
\hbar}|z-z_j|^{2}}+2\underset{k>\ell}{\sum _{k=1}^{m}\sum_{\ell=1}^{m}}
\cos\{\frac{1}{\hbar} [\sigma(z,z_{k}-z_{\ell})+\frac{1}{2}%
\sigma(z_{k},z_{\ell})]\} e^{-\tfrac {1}{\hbar}|z-\langle
z_{k,\ell}\rangle|^{2}}\right).  \notag
\end{align}

%\begin{figure}[htbp]
%\begin{center}
%\includegraphics[width=12cm, height=12cm]{Specdb}
%\caption{Spectrogram, logarithmic  amplitude}
%\label{SpFig}
%\end{center}
%\end{figure}
Figure \ref{WigFig} shows the Wigner transform of the superposition of four
quantum states (Gaussians), in rotated positions for $9$ steps between the
original position and the final position corresponding to a rotation by $%
45^{\circ}$. Both the nature of the interference terms as stated in the
previous corollary and the covariance property of the Wigner transform are
visible. 
\begin{figure}[htbp]
\begin{center}
\includegraphics[width=12cm, height=12cm]{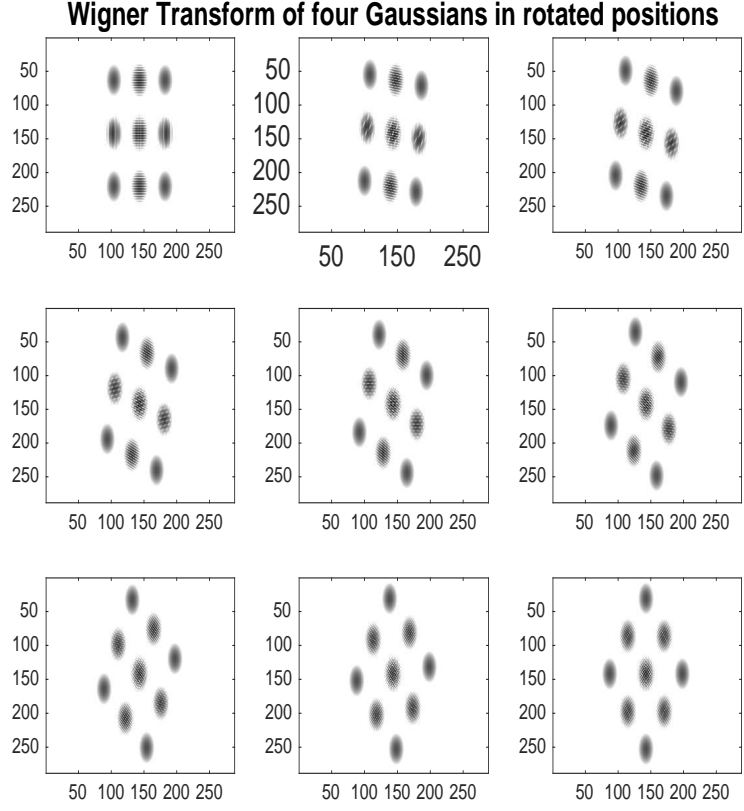}
\end{center}
\caption{Wigner transform, logarithmic amplitude}
\label{WigFig}
\end{figure}

\section{Symplectic covariance for the Cohen class}

A quadratic time-frequency representation $Q$ belongs to the Cohen's class
if it can be written as 
\begin{equation}  \label{Cohen}
Q f= Wf\ast \theta, \quad \forall f\in \mathcal{S}(\mathbb{R}^n)
\end{equation}
for a suitable kernel $\theta\in\mathcal{S}^{\prime }({\mathbb{R}^{2n}})$.

Observe that for a distribution in the Cohen's class, using \eqref{wt8} we
have, for every $f\in\mathcal{S}(\mathbb{R}^n)$, 
\begin{align*}
Q(\widehat{T}(z_{0})f)(z)&=(W(\widehat{T}(z_{0})f)\ast\theta)(z)
=((T(z_{0})Wf)\ast\theta)(z) \\
&=T(z_{0})(Wf\ast\theta)(z)=T(z_{0})Qf(z).
\end{align*}
Hence, for every $Q$ in the Cohen's class, we have the translation formula: 
\begin{equation}
Q(\widehat{T}(z_{0})f)(z)=Q f(z-z_0),
\end{equation}
as for the Wigner distribution.

Let us now study the behaviour of the Cohen's class under the action of
metaplectic operators.

\begin{proposition}[Symplectic covariance of the Cohen's class]
\label{pro36} Consider $\theta\in \mathcal{S}^{\prime }(\mathbb{R}^n)$ and $%
Q $ an element of the Cohen's class having kernel $\theta$, as in %
\eqref{Cohen}. For $\widehat{S}\in Mp(n,\mathbb{R})$ and $S = \pi^{Mp}(%
\widehat{S})$, we have 
\begin{equation}  \label{formula33}
Q(\widehat{S}f)(z)=Wf\ast(\theta\circ S)(S^{-1}z),\quad \forall f\in\mathcal{%
S}(\mathbb{R}^n),\quad z\in{\mathbb{R}^{2n}}.
\end{equation}
\end{proposition}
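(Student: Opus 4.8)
The plan is to reduce the claim to the symplectic covariance of the Wigner distribution (Theorem \ref{teo22}) followed by a linear change of variables, the only arithmetic input being that $\det S=1$ for $S\in\mathrm{Sp}(n,\mathbb{R})$. First I would unwind the definition of $Q$: since $Q$ has Cohen kernel $\theta$, for every $f\in\mathcal{S}(\mathbb{R}^n)$ one has $Q(\widehat{S}f)=W(\widehat{S}f)\ast\theta$, and Theorem \ref{teo22} gives $W(\widehat{S}f)(z)=Wf(S^{-1}z)$. Since $Wf\in\mathcal{S}(\mathbb{R}^{2n})$ whenever $f\in\mathcal{S}(\mathbb{R}^n)$, the convolution $Wf\ast\theta$ is a well-defined smooth function of polynomial growth, so all identities below can be read either pointwise (when $\theta$ is a function) or, in general, as identities of tempered distributions by means of the duality bracket $(g\ast\theta)(z)=\langle\theta_w,\,g(z-w)\rangle$ for $g\in\mathcal{S}(\mathbb{R}^{2n})$.

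Next I would fix the convention for the pullback $\theta\circ S\in\mathcal{S}'(\mathbb{R}^{2n})$, namely $\langle\theta\circ S,\varphi\rangle=\langle\theta,\varphi\circ S^{-1}\rangle$ for $\varphi\in\mathcal{S}(\mathbb{R}^{2n})$ --- no Jacobian factor appears precisely because $|\det S|=1$ --- and then the proof is the one-line chain
\[
Q(\widehat{S}f)(z)=\langle\theta_w,\,W(\widehat{S}f)(z-w)\rangle=\langle\theta_w,\,Wf(S^{-1}z-S^{-1}w)\rangle=\langle(\theta\circ S)_v,\,Wf(S^{-1}z-v)\rangle,
\]
where the second equality uses Theorem \ref{teo22} and the linearity of $S^{-1}$, and the third is the definition of $\theta\circ S$ applied to the test function $v\mapsto Wf(S^{-1}z-v)$ (whose composition with $S^{-1}$ is $w\mapsto Wf(S^{-1}z-S^{-1}w)$). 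The last bracket is by definition $\big(Wf\ast(\theta\circ S)\big)(S^{-1}z)$, which is exactly \eqref{formula33}.

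I do not expect a genuine obstacle here; the only two points deserving care are (a) legitimizing the formal substitution $w=Sv$ in the convolution at the level of $\mathcal{S}'$, which is precisely what the duality definition of $\theta\circ S$ encodes (again with $\det S=1$ so that no density factor is lost), and (b) making sure the pointwise statement \eqref{formula33} is meaningful, which holds because convolving the Schwartz function $Wf$ with a tempered distribution yields an $\mathcal{O}_M$ function, so both sides are honest functions evaluated at a point. If one prefers to bypass distributions entirely, the identity can first be checked for $\theta\in\mathcal{S}(\mathbb{R}^{2n})$ by the elementary substitution $w=Sv$ in $\int W(\widehat{S}f)(z-w)\,\theta(w)\,dw$, and then extended to arbitrary $\theta\in\mathcal{S}'(\mathbb{R}^{2n})$ by continuity and density, since convolution $\mathcal{S}\times\mathcal{S}'\to\mathcal{O}_M$ and the pullback by $S$ are both continuous.
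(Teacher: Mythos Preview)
Your proof is correct and follows essentially the same route as the paper: apply Theorem \ref{teo22} to write $W(\widehat{S}f)(z)=Wf(S^{-1}z)$, then perform the linear substitution $w=Sv$ in the convolution, invoking $\det S=1$. The paper writes the computation as a chain of (formal) integrals with the remark that they are to be understood distributionally, while you phrase the same steps via duality brackets and are more explicit about the $\mathcal{S}'$-level justification, but the argument is the same.
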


\begin{proof}
Recalling the symplectic covariance for the Wigner distribution (Theorem \ref{teo22}) 
\begin{equation*}
W(\widehat{S}f)(z)=Wf(S^{-1}z), \quad f\in\mathcal{S}(\mathbb{R}^n), \quad z
\in{\mathbb{R}^{2n}},
\end{equation*}
we can write, for any $f\in\mathcal{S}(\mathbb{R}^n)$, 
\begin{align*}
Q(\widehat{S}f)(z)&=(W(\widehat{S}f)\ast\theta)(z) \\
&=\int_{{\mathbb{R}^{2n}}} W(\widehat{S}f)(z-w)\theta(w)\, dw \\
&=\int_{{\mathbb{R}^{2n}}} Wf(S^{-1}(z-w))\theta(w)\, dw \\
&=\int_{{\mathbb{R}^{2n}}} Wf(u)\theta(z-Su)\, du \\
&=\int_{{\mathbb{R}^{2n}}} Wf(u)\theta(S(S^{-1}z-u))\, du \\
&=Wf\ast (\theta\circ S)(S^{-1}z)
\end{align*}
where the integrals must be understood in the sense of distributions (we
recall that $Wf\in\mathcal{S}({\mathbb{R}^{2n}})$ for $f\in\mathcal{S}(%
\mathbb{R}^n)$, cf. \cite[Theorem 11.2.5]{book}). Moreover, in the change of
variables we used $\det S=1$.
\end{proof}

As a consequence, we recover the results for the covariance of the
Born--Jordan distribution in \cite{TRANSAM}.

\begin{corollary}\label{cor42}
If $\theta=\theta_{\mathrm{BJ}}$ as in \eqref{e4}, we have the covariance of
the corresponding distribution $Q$ for all the symplectic matrices of the
type $S=J$ or $S=M_L=%
\begin{pmatrix}
L^{-1} & 0 \\ 
0 & L^t%
\end{pmatrix}
$, with $L\in \mathrm{GL}(2n,\mathbb{R})$.
\end{corollary}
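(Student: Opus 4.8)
The plan is to derive the statement directly from Proposition \ref{pro36}. Covariance of $Q$ with respect to a metaplectic operator $\widehat S$ projecting to $S\in\mathrm{Sp}(n,\mathbb{R})$ means $Q(\widehat S f)(z)=Qf(S^{-1}z)=Wf\ast\theta(S^{-1}z)$; comparing with \eqref{formula33}, which gives $Q(\widehat S f)(z)=Wf\ast(\theta\circ S)(S^{-1}z)$, we see that covariance holds as soon as the Cohen kernel $\theta$ is $S$-invariant, i.e. $\theta\circ S=\theta$. So the whole task reduces to checking this invariance for the Born--Jordan kernel and for $S=J$ and $S=M_L$.

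First I would record that $J$ and $M_L$ are indeed symplectic, so that Proposition \ref{pro36} applies: $J\in\mathrm{Sp}(n,\mathbb{R})$ is immediate, and $M_L^{t}JM_L=J$ is a one-line block computation (here $L\in\mathrm{GL}(n,\mathbb{R})$, the blocks of $M_L$ being $n\times n$). Then I would verify the pointwise identity $\theta_{\mathrm{BJ}}(Sz)=\theta_{\mathrm{BJ}}(z)$, with $\theta_{\mathrm{BJ}}(x,p)=\func{sinc}\!\left(\tfrac{1}{2\hbar}\,p\cdot x\right)$ as in \eqref{e4}. For $S=J$ one has $Jz=(p,-x)$, hence $\theta_{\mathrm{BJ}}(Jz)=\func{sinc}\!\left(-\tfrac{1}{2\hbar}\,p\cdot x\right)=\theta_{\mathrm{BJ}}(z)$ because $\func{sinc}$ is even. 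For $S=M_L$ one has $M_Lz=(L^{-1}x,L^{t}p)$, so the argument of the sinc becomes $(L^{t}p)\cdot(L^{-1}x)=p^{t}LL^{-1}x=p\cdot x$, and again $\theta_{\mathrm{BJ}}(M_Lz)=\theta_{\mathrm{BJ}}(z)$.

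The one point that deserves a word of care is that in \eqref{bj} the Born--Jordan distribution is $Wf\ast(\theta_{\mathrm{BJ}})_\sigma$, i.e. the Cohen kernel is the symplectic Fourier transform $(\theta_{\mathrm{BJ}})_\sigma$ rather than $\theta_{\mathrm{BJ}}$ itself. This is harmless: since $\sigma(Sz,Sz')=\sigma(z,z')$ and $\det S=1$ for $S\in\mathrm{Sp}(n,\mathbb{R})$, a change of variables in \eqref{sympft} (extended to tempered distributions by duality) yields $\mathcal{F}_\sigma(a\circ S)=(\mathcal{F}_\sigma a)\circ S$, so $(\theta_{\mathrm{BJ}})_\sigma\circ S=(\theta_{\mathrm{BJ}})_\sigma$ is equivalent to $\theta_{\mathrm{BJ}}\circ S=\theta_{\mathrm{BJ}}$, which was just established. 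Beyond this bookkeeping there is no real obstacle; the only thing to get right is the action of $S$ on the coordinates $z=(x,p)$ and, when $n>1$, the convention that the sinc is evaluated at $\tfrac{1}{2\hbar}\,p\cdot x$, after which the two computations are one line each — so I expect the (very mild) crux to be the cancellation of $L^{t}$ against $L^{-1}$ in the $M_L$ case.
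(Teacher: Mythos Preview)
Your proposal is correct and is exactly the argument the paper has in mind: the corollary is stated immediately after Proposition~\ref{pro36} with no proof given, and your verification that $\theta_{\mathrm{BJ}}\circ S=\theta_{\mathrm{BJ}}$ for $S=J$ and $S=M_L$ (together with the observation that this transfers to $(\theta_{\mathrm{BJ}})_\sigma$ via the symplectic Fourier transform) is precisely the intended one-line check. Your remark that $L$ must lie in $\mathrm{GL}(n,\mathbb{R})$ rather than $\mathrm{GL}(2n,\mathbb{R})$ is also correct; the latter is a typo in the statement.
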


The covariance behavior of the Born--Jordan distribution is illustrated in
Figure \ref{BJFig}, for the same set of rotated superposition of quantum
states we considered in the previous section. We observe that the
interference terms depend substantially on the underlying coordinate system.
In particular, choosing an appropriate rotation, the interference terms can
be significantly damped.

\begin{figure}[htbp]
\begin{center}
\includegraphics[width=12cm, height=12cm]{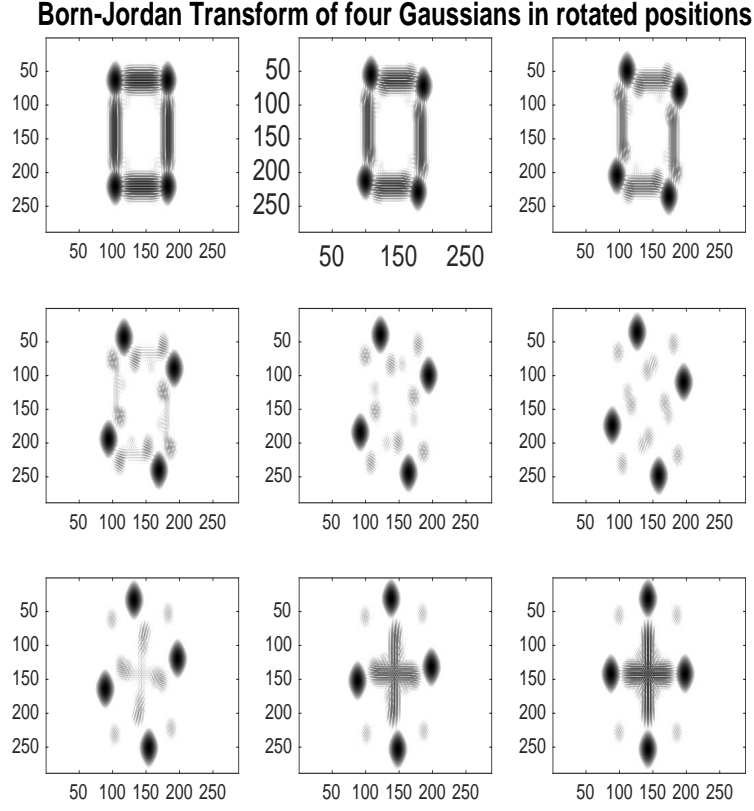}
\end{center}
\caption{Born-Jordan transform, logarithmic amplitude}
\label{BJFig}
\end{figure}

Formula \eqref{formula33} motivates the investigations of the temperate
distributions which are invariant with respect to the action of the linear
symplectic group. We need the following easy lemma.

\begin{lemma}
\label{lemmaff0} Let $\theta\in\mathcal{S}^{\prime }({\mathbb{R}^{2n}})$ and 
$A$ be a square $2n\times 2n$ real matrix. We have 
\begin{equation}  \label{ff0}
\frac{\mathrm{d}}{\mathrm{dt}} \theta(e^{tA}z)|_{t=0}=Az\cdot\nabla_z \theta.
\end{equation}

\begin{proof}
We have, for $\varphi\in\mathcal{S}({\mathbb{R}^{2n}})$, 
\begin{equation}  \label{ff01}
\langle \frac{1}{t}\big(\theta(e^{tA}\cdot)-\theta\big),\varphi\rangle=%
\langle \theta, \frac{1}{t}\big(e^{-t\,\mathrm{Tr}\,A}\varphi(e^{-tA}\cdot)-%
\varphi\big)\rangle.
\end{equation}
Now, as $t\to 0$ we have 
\begin{equation*}
\frac{1}{t}\big(e^{-t\,\mathrm{Tr}\,A}\varphi(e^{-tA}\cdot)-\varphi\big)\to -%
\mathrm{Tr}\,A\,\varphi-Az\cdot\nabla_z \varphi
\end{equation*}
in $\mathcal{S}({\mathbb{R}^{2n}})$, so that the left-hand side in %
\eqref{ff01} tends to 
\begin{align*}
\langle \theta, -\mathrm{Tr}\,A\,\varphi-Az\cdot\nabla_z
\varphi\rangle&=\langle -\mathrm{Tr}\,A\,\theta +\mathrm{div}\big(Az\,\theta%
\big),\varphi\rangle \\
&=\langle Az\cdot\nabla_z\theta,\varphi\rangle.
\end{align*}
\end{proof}
\end{lemma}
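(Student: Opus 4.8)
The plan is to read $\theta(e^{tA}z)$ as the pullback of $\theta\in\mathcal{S}'({\mathbb{R}^{2n}})$ under the linear automorphism $z\mapsto e^{tA}z$, and to compute the weak-$\ast$ derivative at $t=0$ by pairing with an arbitrary $\varphi\in\mathcal{S}({\mathbb{R}^{2n}})$. Since $\det e^{tA}=e^{t\,\mathrm{Tr}\,A}>0$, the change-of-variables rule for pullbacks gives $\langle\theta(e^{tA}\cdot),\varphi\rangle=e^{-t\,\mathrm{Tr}\,A}\langle\theta,\varphi(e^{-tA}\cdot)\rangle$, so that
\begin{equation*}
\frac1t\big(\langle\theta(e^{tA}\cdot),\varphi\rangle-\langle\theta,\varphi\rangle\big)=\Big\langle\theta,\ \frac1t\big(e^{-t\,\mathrm{Tr}\,A}\varphi(e^{-tA}\cdot)-\varphi\big)\Big\rangle,
\end{equation*}
and the whole matter reduces to passing to the limit $t\to0$ inside the continuous functional $\theta$.

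The crucial step is to prove that
\begin{equation*}
\frac1t\big(e^{-t\,\mathrm{Tr}\,A}\varphi(e^{-tA}\cdot)-\varphi\big)\ \longrightarrow\ -\mathrm{Tr}\,A\,\varphi-Az\cdot\nabla_z\varphi\qquad\text{in }\mathcal{S}({\mathbb{R}^{2n}})\ \text{as }t\to0.
\end{equation*}
The pointwise limit is just the product-and-chain rule applied to $g(t,z):=e^{-t\,\mathrm{Tr}\,A}\varphi(e^{-tA}z)$ at $t=0$; the real content is that this holds for \emph{every} Schwartz seminorm. Here I would use Taylor's formula with integral remainder in $t$, $g(t,\cdot)-g(0,\cdot)-t\,\partial_t g(0,\cdot)=t^2\int_0^1(1-s)\,\partial_t^2 g(st,\cdot)\,ds$, and bound $\sup_z|z^\alpha\partial_z^\beta\partial_t^2 g(st,z)|$ uniformly for $s$ in a neighborhood of $0$: since $e^{-sA}$ and all its $s$-derivatives $(-A)^k e^{-sA}$ are bounded matrices on compact $s$-intervals, composition of the Schwartz function $\varphi$ with the linear map $e^{-sA}$ preserves rapid decay with locally uniform control of every seminorm, and so does multiplication by the smooth scalar $e^{-s\,\mathrm{Tr}\,A}$. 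I expect this uniform-in-$s$ seminorm estimate to be the only genuinely technical point.

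Granting the $\mathcal{S}$-convergence, continuity of $\theta$ yields $\frac{\mathrm{d}}{\mathrm{dt}}\langle\theta(e^{tA}\cdot),\varphi\rangle|_{t=0}=\langle\theta,-\mathrm{Tr}\,A\,\varphi-Az\cdot\nabla_z\varphi\rangle$. To finish I would move the derivatives onto $\theta$: as $\mathrm{div}(Az)=\mathrm{Tr}\,A$, one has the distributional identity $\mathrm{div}(Az\,\theta)=\mathrm{Tr}\,A\,\theta+Az\cdot\nabla_z\theta$, whence $-\mathrm{Tr}\,A\,\theta+\mathrm{div}(Az\,\theta)=Az\cdot\nabla_z\theta$; pairing with $\varphi$ turns the right-hand side above into $\langle Az\cdot\nabla_z\theta,\varphi\rangle$. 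Since $\varphi\in\mathcal{S}({\mathbb{R}^{2n}})$ was arbitrary, this proves \eqref{ff0}.
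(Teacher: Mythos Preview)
Your proof is correct and follows essentially the same route as the paper's: pair with a test function, transfer the action to the test-function side via the change-of-variables rule for pullbacks (picking up the Jacobian $e^{-t\,\mathrm{Tr}\,A}$), pass to the limit in $\mathcal{S}({\mathbb{R}^{2n}})$, and then use the identity $\mathrm{div}(Az\,\theta)=\mathrm{Tr}\,A\,\theta+Az\cdot\nabla_z\theta$ to rewrite the result as $\langle Az\cdot\nabla_z\theta,\varphi\rangle$. The only difference is that you supply more detail (the Taylor remainder argument) for the $\mathcal{S}$-convergence step, which the paper simply asserts.
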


\begin{proposition}
\label{teosimpinv} Let $\theta\in\mathcal{S}^{\prime }({\mathbb{R}^{2n}})$
be such that 
\begin{equation*}
\theta\circ S=\theta
\end{equation*}
for every $S\in \mathrm{Sp}(n,\mathbb{R})$. Then 
\begin{equation*}
\theta=c_0+c_1\delta
\end{equation*}
for some $c_0,c_1\in\mathbb{C}$.
\end{proposition}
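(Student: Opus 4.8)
The strategy is to differentiate the invariance relation $\theta\circ S=\theta$ along one-parameter subgroups of $\mathrm{Sp}(n,\mathbb{R})$ and to use Lemma~\ref{lemmaff0} to turn this into a system of first-order PDEs for $\theta$. Concretely, every $A\in\mathfrak{sp}(n,\mathbb{R})$ generates the subgroup $e^{tA}\subset\mathrm{Sp}(n,\mathbb{R})$, and since $\theta(e^{tA}z)=\theta(z)$ for all $t$, differentiating at $t=0$ and invoking \eqref{ff0} gives
\begin{equation*}
Az\cdot\nabla_z\theta=0\qquad\text{in }\mathcal{S}'({\mathbb{R}^{2n}})\quad\text{for every }A\in\mathfrak{sp}(n,\mathbb{R}).
\end{equation*}
Thus $\theta$ is annihilated by the infinitesimal generators of the full symplectic action.

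The heart of the argument is then a purely algebraic observation about the Lie algebra $\mathfrak{sp}(n,\mathbb{R})$: the vector fields $z\mapsto Az$ with $A\in\mathfrak{sp}(n,\mathbb{R})$ span, at every point $z\neq 0$, the entire tangent space $\mathbb{R}^{2n}$. Indeed $\mathfrak{sp}(n,\mathbb{R})$ contains the "shear" generators $\begin{pmatrix}0&B\\0&0\end{pmatrix}$ and $\begin{pmatrix}0&0\\C&0\end{pmatrix}$ with $B,C$ symmetric, as well as $\begin{pmatrix}D&0\\0&-D^t\end{pmatrix}$ for arbitrary $D$; a short computation shows that for any fixed $z=(x,p)\neq 0$ the images $Az$ as $A$ ranges over these generators already exhaust $\mathbb{R}^{2n}$ (for instance, if $x\neq 0$ the symmetric matrices $C$ produce all vectors of the form $(0,Cx)$, hence all of $\{0\}\times\mathbb{R}^n$, while $D$ acting on $x$ sweeps out a complementary direction, and similarly if $p\neq 0$). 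Consequently, on the open set ${\mathbb{R}^{2n}}\setminus\{0\}$ the relations $Az\cdot\nabla_z\theta=0$ force $\nabla_z\theta=0$ as a distribution on ${\mathbb{R}^{2n}}\setminus\{0\}$, so $\theta$ is (locally, hence globally, since ${\mathbb{R}^{2n}}\setminus\{0\}$ is connected for $n\geq 1$) equal to a constant $c_0$ there.

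It remains to pass from ${\mathbb{R}^{2n}}\setminus\{0\}$ to all of ${\mathbb{R}^{2n}}$. Having shown $\theta-c_0$ is a tempered distribution supported at the origin, the standard structure theorem gives $\theta-c_0=\sum_{|\alpha|\leq N}c_\alpha\,\partial^\alpha\delta$ for finitely many constants $c_\alpha$. Plugging this back into the invariance condition $\theta\circ S=\theta$ (equivalently, re-using $Az\cdot\nabla_z(\theta-c_0)=0$ for all $A\in\mathfrak{sp}(n,\mathbb{R})$, noting that $Az\cdot\nabla_z$ of a constant vanishes) and matching the homogeneity degree $-2n-|\alpha|$ of $\partial^\alpha\delta$, one sees the derivative terms must transform among themselves within each fixed order; a direct check using, say, the dilations $\delta_\lambda\in\mathrm{Sp}(n,\mathbb{R})$ (which scale $\partial^\alpha\delta$ by $\lambda^{-|\alpha|}$ in a way that cannot be symplectically compensated) together with the rotation-type elements of $\mathrm{Sp}(n,\mathbb{R})$ kills every term with $|\alpha|\geq 1$, leaving only the multiple $c_1\delta$ of order zero. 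Hence $\theta=c_0+c_1\delta$.

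**Main obstacle.** The delicate point is the final step: ruling out higher-order derivatives of $\delta$. It is not enough to know $\theta-c_0$ is supported at $0$; one must genuinely exploit the symplectic invariance on the finite-dimensional space of such distributions. The cleanest route is to combine the scaling behaviour under the symplectic dilations $z\mapsto(\lambda x,\lambda^{-1}p)$ with invariance under enough other symplectic maps (e.g. $J$ and the shears) to show that an invariant distribution homogeneous of a given degree and supported at the origin can only be $\delta$ itself (degree $-2n$); this homogeneity/invariance bookkeeping is where the real work lies, whereas the infinitesimal reduction on ${\mathbb{R}^{2n}}\setminus\{0\}$ is essentially immediate from Lemma~\ref{lemmaff0}.
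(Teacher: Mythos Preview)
Your argument up to and including the identification of $\theta-c_0$ as a finite linear combination of derivatives of $\delta$ follows the paper's proof almost verbatim; your explicit verification that $\{Az:A\in\mathfrak{sp}(n,\mathbb{R})\}=\mathbb{R}^{2n}$ for $z\neq 0$ via the block generators is a hands-on version of the paper's one-line remark that this set is the tangent space at $z$ to the (unique, by transitivity) $\mathrm{Sp}(n,\mathbb{R})$-orbit through $z$.

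The final step, however, contains a real error. The symplectic dilation $S_\lambda:(x,p)\mapsto(\lambda x,\lambda^{-1}p)$ does \emph{not} scale $\partial^\gamma\delta$ by $\lambda^{-|\gamma|}$: writing $\gamma=(\alpha,\beta)$ for the $x$- and $p$-indices one finds $(\partial_x^\alpha\partial_p^\beta\delta)\circ S_\lambda=\lambda^{|\beta|-|\alpha|}\,\partial_x^\alpha\partial_p^\beta\delta$ (recall $\det S_\lambda=1$). Thus within a fixed total order $k$ the dilation acts with a whole range of eigenvalues $\lambda^{k},\lambda^{k-2},\ldots,\lambda^{-k}$, and in particular every term with $|\alpha|=|\beta|$ is genuinely dilation-invariant. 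Your ``direct check'' therefore does not go through as written. The approach can be salvaged---one must combine the dilation eigenspace decomposition with invariance under $J$ and the shears, which ultimately amounts to showing that $\mathrm{Sp}(n,\mathbb{R})$ has no nonzero invariants in the homogeneous polynomials of degree $k\geq 1$ on $\mathbb{R}^{2n}$ (again a consequence of transitivity on $\mathbb{R}^{2n}\setminus\{0\}$)---but this is more than the homogeneity bookkeeping you sketch.

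The paper bypasses all of this with a Fourier-transform trick. Since $S\in\mathrm{Sp}(n,\mathbb{R})$ implies $(S^{T})^{-1}\in\mathrm{Sp}(n,\mathbb{R})$ and $\det S=1$, the invariance $\theta\circ S=\theta$ passes to $\widehat\theta$. Rerunning the first part of the argument for $\widehat\theta$ yields $\widehat\theta=c_2+v$ with $\mathrm{supp}\,v\subset\{0\}$. But the Fourier transform of $\sum_{1\leq|\gamma|\leq m}c'_\gamma\partial^\gamma\delta$ is a polynomial with no constant term, which is neither constant nor supported at the origin; comparing the two expressions forces every $c'_\gamma$ to vanish. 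This avoids any direct analysis of how $\mathrm{Sp}(n,\mathbb{R})$ acts on the derivatives of $\delta$.
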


\begin{proof}
It suffices to consider the case where $S=e^{A}$ with $A\in \mathfrak{sp}(n,%
\mathbb{R})$ (the symplectic algebra, \textit{i.e.} the Lie algebra of $%
\mathrm{Sp}(n,\mathbb{R})$). Consider the one-parameter group of symplectic
matrices $e^{tA}\in \mathrm{Sp}(n,\mathbb{R})$, $t\in \mathbb{R}$. The
assumption $\theta \circ S=\theta $ implies that 
\begin{equation*}
\theta (e^{tA}z)=\theta (z)
\end{equation*}%
for every $A\in \mathfrak{sp}(n,\mathbb{R})$ and therefore 
\begin{equation*}
\frac{\mathrm{d}}{\mathrm{dt}}\theta (e^{tA}z)=0.
\end{equation*}%
On the other hand, by Lemma \ref{lemmaff0}, we have 
\begin{equation}
\frac{\mathrm{d}}{\mathrm{dt}}\theta (e^{tA}z)|_{t=0}=Az\cdot \nabla
_{z}\theta   \label{ff1}
\end{equation}%
and therefore 
\begin{equation}
Az\cdot \nabla _{z}\theta =0\quad \forall A\in \mathfrak{sp}(n,\mathbb{R}).
\label{ff2}
\end{equation}%
Now, for $z\not=0$ fixed, we have 
\begin{equation}
\mathcal{T}_{z}=\{Az:\ A\in \mathfrak{sp}(n,\mathbb{R})\}={\mathbb{R}^{2n}}%
,\quad z\not=0.  \label{ff3}
\end{equation}%
In fact $\mathcal{T}_{z}\simeq \{Az\cdot \nabla _{z}:\ A\in \mathfrak{sp}(n,%
\mathbb{R})\}$ is the tangent space at $z$ to the orbit of the action of $%
\mathrm{Sp}(n,\mathbb{R})$ on ${\mathbb{R}^{2n}}\setminus \{0\}$. Since the
action is transitive, the orbit is the whole ${\mathbb{R}^{2n}}\setminus
\{0\}$, and \eqref{ff3} follows.

As a consequence of \eqref{ff2} and \eqref{ff3} we have $\nabla_z \theta=0$
in ${\mathbb{R}^{2n}}\setminus\{0\}$, and therefore $\theta=c_0$ in ${%
\mathbb{R}^{2n}}\setminus\{0\}$. The distribution $\theta-c_0$ in ${\mathbb{R%
}^{2n}}$ is supported at $0$, so that 
\begin{equation}  \label{ff4}
\theta=c_0+c_1\delta+\sum_{1\leq|\alpha|\leq m} c^{\prime
}_{\alpha}\partial^{\alpha}_{z} \delta
\end{equation}
for some $m\geq 1$, $c^{\prime}_{\alpha}\in\mathbb{C}$. We have to show that
in fact the summation in \eqref{ff4} is zero. To this end we observe that,
taking the Fourier transform of both sides of the invariance property $%
\theta\circ S=\theta$, $S\in \mathrm{Sp}(n,\mathbb{R})$, we see that $%
\widehat{\theta}$ enjoys the same invariance, so that by the above argument
we have 
\begin{equation*}
\widehat{\theta}=c_2+v,
\end{equation*}
for some $c_2\in\mathbb{C}$, $v\in \mathcal{S}^{\prime }({\mathbb{R}^{2n}})$
supported at $0$. This is compatible with \eqref{ff4} only if the summation
in \eqref{ff4} is zero.
\end{proof}

\begin{remark}
An inspection of the above proof shows that we only need invariance with
respect to the symplectic matrices of the form $e^{tA}$, with $a\in A\in%
\mathfrak{sp}(n,\mathbb{R})$ and small $t$. However, this condition turns
out to be equivalent to the invariance with respect to the full symplectic
group, because any connected Lie group is generated by a neighborhood of the
identity.
\end{remark}

We now prove the characterization of the Wigner transform announced in
Introduction. For a quadratic map $Q$ we denote by $Q(f,g)$ its
corresponding sesquilinear map.

\begin{theorem}
\label{teonew} Consider a quadratic continuous time-frequency distribution $%
Q:L^2(\mathbb{R}^n)\to C_0({\mathbb{R}^{2n}})$, i.e. $Qf=Q(f,f)$ for a
sequilinear continuous map $Q:L^2(\mathbb{R}^n)\times L^2(\mathbb{R}^n)\to
C_0({\mathbb{R}^{2n}})$. Suppose that $Q$ enjoys:

\medskip (i) the covariance property with respect to translations in the
time-frequency plane: 
\begin{equation}  \label{uno-ibis}
Q(\widehat{T}(z_{0})f)(z)=Q f(z-z_0),\quad\forall f\in\mathcal{S}(\mathbb{R}%
^n),\ z_0\in\mathbb{R}^{2n};
\end{equation}

\medskip

(ii) the covariance property with respect to symplectic linear
transformations: for every $\widehat{S}\in\mathrm{Mp}(n,\mathbb{R})$, with $%
S=\pi^{\mathrm{Mp}}(\widehat{S})\in\mathrm{Sp}(n,\mathbb{R})$ 
\begin{equation}  \label{due-ibis}
Q(\widehat{S}f)(z)=Qf(S^{-1}z),\quad \forall f\in\mathcal{S}(\mathbb{R}^n),\
z\in\mathbb{R}^{2n}.
\end{equation}
Then 
\begin{equation*}
Qf=c\, Wf
\end{equation*}
for some constant $c\in\mathbb{C}$.
\end{theorem}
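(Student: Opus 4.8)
The plan is to leverage the Schwartz kernel theorem to reduce the problem to the Cohen class framework, then apply Proposition \ref{teosimpinv}. First I would argue that a continuous sesquilinear map $Q:L^2(\mathbb{R}^n)\times L^2(\mathbb{R}^n)\to C_0(\mathbb{R}^{2n})\subset \mathcal{S}'(\mathbb{R}^{2n})$ is given by a distributional kernel, and that the translation covariance \eqref{uno-ibis} (together with the analogous bilinear version, obtained by polarization) forces $Q$ to lie in the Cohen class: that is, there exists $\theta\in\mathcal{S}'(\mathbb{R}^{2n})$ with $Qf=Wf\ast\theta$. The cleanest route is to compare $Q$ with the Wigner transform directly: since the cross-Wigner transform $W:L^2\times L^2\to \mathcal{S}'$ has the property that $\{W(f,g):f,g\in\mathcal{S}\}$ spans a weak-$*$ dense subspace, and since both $Q$ and $W$ intertwine the Heisenberg action in the same way, a standard argument (essentially the uniqueness in the Schwartz kernel theorem applied to the bilinear form $(f,g,\varphi)\mapsto \langle Q(f,g),\varphi\rangle$ combined with irreducibility of the Schrödinger representation) yields that $\langle Q(f,g),\varphi\rangle = \langle W(f,g),\theta^\vee\ast\varphi\rangle$ for a fixed $\theta$; equivalently $Qf=Wf\ast\theta$ in $\mathcal{S}'$. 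One must be slightly careful here: the hypothesis gives $Q$ defined only on $L^2$ with values in $C_0$, but $C_0\hookrightarrow\mathcal{S}'$ continuously, so everything makes sense as tempered distributions.

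Once $Q$ is identified as an element of the Cohen class with kernel $\theta$, I would invoke Proposition \ref{pro36}: the symplectic covariance hypothesis \eqref{due-ibis} says $Q(\widehat S f)(z)=Qf(S^{-1}z)$, while the proposition computes $Q(\widehat S f)(z)=Wf\ast(\theta\circ S)(S^{-1}z)$. Comparing the two, and using that $Wf(S^{-1}\cdot)=W(\widehat S f)$ ranges over a dense set as $f$ and $\widehat S$ vary, we get $Wf\ast(\theta\circ S)=Wf\ast\theta$ for all $f\in\mathcal{S}$, hence $\theta\circ S=\theta$ for every $S\in\mathrm{Sp}(n,\mathbb{R})$. (The deduction $\theta\circ S=\theta$ from $Wf\ast(\theta\circ S)=Wf\ast\theta$ uses that the Wigner transforms of Schwartz functions, or their linear span, are weak-$*$ dense in $\mathcal{S}'$; alternatively one can test against a Gaussian and use that $W\psi_0^\hbar$ is a nondegenerate Gaussian, so convolution with it is injective on $\mathcal{S}'$.) Then Proposition \ref{teosimpinv} gives $\theta=c_0+c_1\delta$.

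It remains to kill the constant term $c_0$. This is where the hypothesis that $Q$ maps into $C_0(\mathbb{R}^{2n})$, rather than merely into $\mathcal{S}'$, is essential and is the one nontrivial input beyond the covariance machinery. If $\theta=c_0+c_1\delta$ then $Qf=c_1 Wf+c_0\left(\int_{\mathbb{R}^{2n}}Wf(w)\,dw\right)$, and the integral equals $\|f\|_{L^2}^2$ by the well-known marginal/normalization property of the Wigner transform. So $Qf=c_1 Wf+c_0\|f\|_2^2$, and this is the constant function $c_0\|f\|_2^2$ plus $c_1 Wf$. For $Qf$ to belong to $C_0(\mathbb{R}^{2n})$ we need the constant function $c_0\|f\|_2^2$ to vanish at infinity, which forces $c_0=0$ (take any $f\neq 0$). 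Hence $\theta=c_1\delta$ and $Qf=c_1 Wf$, with $c=c_1$.

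The main obstacle I anticipate is the first step: rigorously showing that the abstract covariance hypotheses force $Q$ into the Cohen class, i.e. producing the kernel $\theta$. This requires handling the bilinear/distributional kernel carefully (Schwartz kernel theorem for the trilinear form), and using the irreducibility of the Schrödinger representation — or, equivalently, the fact that finite linear combinations of time-frequency shifts of a single nonzero vector are dense — to pin down the intertwiner up to the convolution kernel $\theta$. The remaining steps (applying Propositions \ref{pro36} and \ref{teosimpinv}, and the $C_0$ argument ruling out $c_0$) are then essentially bookkeeping. One subtlety worth flagging: one should check that the density argument used to pass from ``$Wf\ast(\theta\circ S)=Wf\ast\theta$ for all $f\in\mathcal{S}$'' to ``$\theta\circ S=\theta$'' is valid; testing against the Gaussian $\psi_0^\hbar$ and invoking injectivity of convolution by a Gaussian on $\mathcal{S}'$ is the safest formulation.
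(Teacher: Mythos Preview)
Your proposal is correct and follows essentially the same route as the paper: reduce to the Cohen class via translation covariance (the paper cites \cite[Theorem 4.5.1]{book} directly for this step rather than sketching the kernel/irreducibility argument), apply Proposition~\ref{pro36} to obtain $Wf\ast(\theta\circ S)=Wf\ast\theta$, deduce $\theta\circ S=\theta$ by testing against a Gaussian (the paper does this by taking Fourier transforms and using that $\widehat{Wf}$ is a nonvanishing Gaussian, which is exactly your ``injectivity of convolution by a Gaussian'' remark), invoke Proposition~\ref{teosimpinv}, and eliminate $c_0$ via the $C_0$ hypothesis.
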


\begin{proof}
It follows from \cite[Theorem 4.5.1]{book} that the continuity assumption 
\begin{equation*}
|Q(f,g)(z)|\leq C\|f\|_{L^2(\mathbb{R}^n)}\|g\|_{L^2(\mathbb{R}^n)}, \quad
\forall f,g\in\mathcal{S}(\mathbb{R}^n),\ z\in{\mathbb{R}^{2n}}
\end{equation*}
together with \eqref{uno-ibis} imply that 
\begin{equation*}
Qf=Wf\ast\theta,\quad \forall f\in\mathcal{S}(\mathbb{R}^n)
\end{equation*}
for some distribution $\theta\in\mathcal{S}^{\prime }({\mathbb{R}^{2n}})$,
i.e.\ $Q$ is a distribution in the Cohen class.

Using this expression for $Q$, together with \eqref{formula33} and %
\eqref{due-ibis} we get 
\begin{equation*}
Wf\ast(\theta\circ S)(S^{-1}z)=Wf\ast\theta(S^{-1}z), \quad \forall f\in%
\mathcal{S}(\mathbb{R}^n),\ \forall z\in{\mathbb{R}^{2n}}.
\end{equation*}
Replacing $S^{-1}z$ by $z$ and taking the Fourier transform we get 
\begin{equation*}
\widehat{Wf}\, \widehat{\theta\circ S}=\widehat{Wf}\,\widehat{\theta},\quad
\forall f\in\mathcal{S}(\mathbb{R}^n).
\end{equation*}
If $f$ is a Gaussian function in $\mathbb{R}^n$, $\widehat{Wf}$ will be a
Gaussian itself in ${\mathbb{R}^{2n}}$, and in particular never vanishes.
Hence we obtain $\theta\circ S=\theta$ for every $S\in \mathrm{Sp}(n,\mathbb{%
R})$. From Proposition \ref{teosimpinv} we have $\theta=c_0+c_1\delta$ for
some $c_0,c_1\in\mathbb{C}$. Finally, since the distribution $%
Qf=Wf\ast\theta $ is assumed to tend to $0$ at infinity, for every $f\in%
\mathcal{S}(\mathbb{R}^n)$, it must be $c_0=0$ and we obtain the desired
result.
\end{proof}

We now provide a similar characterization for the Weyl transform. We begin
by characterizing the transform which enjoy a covariance property with
respect to time-frequency shifts, alias Heisenberg operators $\widehat{T}%
(z_{0})$.

\begin{theorem}
\label{teon} Consider a linear continuous mapping 
\begin{equation*}
\mathcal{S}^{\prime }({\mathbb{R}^{2n}})\to\mathcal{L}(\mathcal{S}(\mathbb{R}%
^n), \mathcal{S}^{\prime }(\mathbb{R}^n)),
\end{equation*}
say $a\longmapsto A_a$, satisfying the covariance property with respect to
the Heisenberg operators: 
\begin{equation}  \label{ff5}
\widehat{T}(-z_{0}) A_a \widehat{T}(z_{0})=A_{{T}(-z_{0})a} \quad \forall
z_0\in{\mathbb{R}^{2n}}.
\end{equation}
Then there exists a distribution $\theta\in\mathcal{S}^{\prime }({\mathbb{R}%
^{2n}})$, with $\widehat{\theta}$ smooth in ${\mathbb{R}^{2n}}$, such that 
\begin{equation}  \label{ff6}
\langle A_af,g\rangle=\langle a, W(g,f)\ast\theta\rangle \quad \forall
f,g\in \mathcal{S}(\mathbb{R}^d).
\end{equation}
\end{theorem}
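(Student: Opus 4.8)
The plan is to reduce the statement about an abstract linear covariant assignment $a\mapsto A_a$ to the already-established structure of the Cohen class, by applying the operator $A_a$ to a fixed test function and tracking how the covariance relation \eqref{ff5} interacts with the translation formula for the Wigner transform.

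First I would fix $f,g\in\mathcal{S}(\mathbb{R}^n)$ and consider the bilinear-in-$(f,g)$, linear-in-$a$ functional $\Phi_{f,g}(a):=\langle A_a f,g\rangle$. By Schwartz's kernel theorem (or the continuity hypothesis directly), for each $a$ the operator $A_a$ has a kernel, and $\Phi_{f,g}$ is a continuous functional of $a\in\mathcal{S}'(\mathbb{R}^{2n})$ that is weak-$\ast$--to--scalar continuous; hence there is $K_{f,g}\in\mathcal{S}(\mathbb{R}^{2n})$ with $\langle A_a f,g\rangle = \langle a, K_{f,g}\rangle$. The map $(f,g)\mapsto K_{f,g}$ is sesquilinear and continuous $\mathcal{S}(\mathbb{R}^n)\times\mathcal{S}(\mathbb{R}^n)\to\mathcal{S}(\mathbb{R}^{2n})$. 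The goal becomes: show $K_{f,g}=W(g,f)\ast\theta$ for a single $\theta$ with $\widehat\theta$ smooth. Next I would feed the covariance hypothesis \eqref{ff5} into this. Replacing $f,g$ by $\widehat T(z_0)f,\widehat T(z_0)g$ and using $\langle \widehat T(-z_0)A_a\widehat T(z_0)f,g\rangle=\langle A_a\widehat T(z_0)f,\widehat T(z_0)g\rangle$, together with $\langle a, K_{\widehat T(z_0)f,\widehat T(z_0)g}\rangle = \langle A_{T(-z_0)a}f,g\rangle=\langle T(-z_0)a,K_{f,g}\rangle=\langle a, T(z_0)K_{f,g}\rangle$, I get the intertwining identity
\begin{equation*}
K_{\widehat T(z_0)f,\widehat T(z_0)g}=T(z_0)K_{f,g},\qquad \forall z_0\in\mathbb{R}^{2n}.
\end{equation*}
This is exactly the translation-covariance of a sesquilinear $C_0$-valued (here even $\mathcal{S}$-valued) map, so the representation-theoretic lemma from \cite[Theorem 4.5.1]{book} — invoked in the same way as in the proof of Theorem \ref{teonew} — applies and yields a distribution $\theta\in\mathcal{S}'(\mathbb{R}^{2n})$ with $K_{f,g}=W(g,f)\ast\theta$ for all $f,g\in\mathcal{S}(\mathbb{R}^n)$. (Note the ordering $W(g,f)$: this is forced by the convention $\langle \widehat A_{\mathrm W}f,g\rangle=\langle a,W(g,f)\rangle$ for the Weyl case $\theta=\delta$.)

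It remains to prove $\widehat\theta$ is smooth. Here I would use the hypothesis that the \emph{range} of $a\mapsto A_a$ lands in $\mathcal{L}(\mathcal{S},\mathcal{S}')$ for every $a\in\mathcal{S}'(\mathbb{R}^{2n})$, i.e.\ that $W(g,f)\ast\theta$ is a legitimate pairing against arbitrary tempered $a$. Writing $\langle a, W(g,f)\ast\theta\rangle = \langle \widehat a\cdot\widehat\theta, \widehat{W(g,f)}\rangle$ after a Fourier transform (using that $\widehat{W(g,f)}$ is the cross-ambiguity function, which for Gaussian $f,g$ is a nonvanishing Gaussian), the requirement that this be finite for \emph{all} $a\in\mathcal{S}'$ forces $\widehat\theta$ to be a smooth function (if $\widehat\theta$ had a singular part supported on a positive-dimensional set, or derivatives of deltas, one could choose $\widehat a$ singular there and break continuity). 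More carefully: continuity of $a\mapsto A_a$ from $\mathcal{S}'$ into $\mathcal{L}(\mathcal{S},\mathcal{S}')$ means $(f,g,a)\mapsto\langle a,W(g,f)\ast\theta\rangle$ is separately continuous, and testing with $a=\delta_{z}$ translates (pun aside) into $W(g,f)\ast\theta$ being a continuous — indeed smooth — function for each $f,g$, which, by choosing Gaussians and undoing the convolution in Fourier, gives smoothness of $\widehat\theta$ away from the zero set of the ambiguity function; varying the Gaussian sweeps out all of $\mathbb{R}^{2n}$, giving global smoothness of $\widehat\theta$.

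The main obstacle is this last step: making rigorous the claim that admissibility against \emph{all} tempered symbols $a$ upgrades $\theta$ from a mere tempered distribution to one with smooth Fourier transform. The clean way is to argue by contradiction using a closed-graph / uniform-boundedness argument on the separately continuous trilinear form, or to note that $a\mapsto \langle a, W(g,f)\ast\theta\rangle$ being defined on all of $\mathcal{S}'$ forces $W(g,f)\ast\theta\in\mathcal{S}(\mathbb{R}^{2n})$ (the only functionals on $\mathcal{S}'$ are given by pairing with $\mathcal{S}$), and then deconvolving with a Gaussian ambiguity function — which is a Schwartz function with nonvanishing Fourier transform of at most Gaussian decay — shows $\widehat\theta\cdot(\text{Gaussian})$ is the Fourier transform of a Schwartz function divided appropriately, hence $\widehat\theta$ is smooth with controlled growth. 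I would present the deconvolution version as the cleanest route, since it parallels the division-problem philosophy already used in the Born–Jordan discussion in Section \ref{sec1}.
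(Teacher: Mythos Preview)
Your argument is correct and follows the same overall strategy as the paper: reduce to the Cohen class via translation covariance and \cite[Theorem 4.5.1]{book}, then deconvolve against a Gaussian Wigner transform to extract smoothness of $\widehat\theta$. The one genuine difference is in how you obtain $W(g,f)\ast\theta\in\mathcal{S}(\mathbb{R}^{2n})$. The paper first establishes \eqref{ff6} for the special symbols $a=\delta(\cdot-z)$, then invokes density of their linear span in $\mathcal{S}'(\mathbb{R}^{2n})$ to extend to all $a$ and conclude that the right-hand side must pair against every tempered distribution, hence lie in $\mathcal{S}$. You instead use reflexivity of $\mathcal{S}$ directly: since $a\mapsto\langle A_af,g\rangle$ is continuous on $\mathcal{S}'$, it is represented by some $K_{f,g}\in\mathcal{S}$ from the outset, and the density step is bypassed. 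This is a clean shortcut and arguably more transparent. One small redundancy: in your deconvolution step you need not ``vary the Gaussian'' --- for a single Gaussian $f$, $\widehat{Wf}$ is already a nowhere-vanishing Gaussian on all of $\mathbb{R}^{2n}$, so dividing once suffices, exactly as the paper does.
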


\begin{proof}
By polarization it is sufficient to prove \eqref{ff6} when $f=g$. Now,
define the quadratic distribution 
\begin{equation*}
Q f(z)= \langle A_a f,f\rangle,\quad \mathrm{with}\
a(\cdot)=\delta(\cdot-z),\ z\in{\mathbb{R}^{2n}}.
\end{equation*}
By \eqref{ff5} it satisfies the covariance property 
\begin{equation*}
Q(\widehat{T}(z)f)=T(z) Q f
\end{equation*}
so that it follows from \cite[Theorem 4.5.1]{book} (or better from its
proof) that there exists a distribution $\theta\in\mathcal{S}^{\prime }({%
\mathbb{R}^{2n}})$ such that 
\begin{equation*}
Qf=Wf\ast \theta.
\end{equation*}
This proves \eqref{ff6} when $a(\cdot)=\delta(\cdot-z)$, $z\in{\mathbb{R}%
^{2n}}$.

Now, let $\mathcal{L}$ be the linear span of such symbols in $\mathcal{S}%
^{\prime }({\mathbb{R}^{2n}})$. Since the left-hand side of \eqref{ff6} is
continuous as a functional of $a\in\mathcal{S}^{\prime }({\mathbb{R}^{2n}})$%
, for fixed $f,g\in\mathcal{S}({\mathbb{R}^{2n}})$, we see that the
right-hand side extends to a continuous functional $\mathcal{S}^{\prime }({%
\mathbb{R}^{2n}})\to\mathbb{C}$, i.e.\ there exists a Schwartz function $b\in%
\mathcal{S}({\mathbb{R}^{2n}})$ such that 
\begin{equation*}
\langle a, W(g,f)\ast\theta\rangle =\langle a, b\rangle\quad \forall a\in%
\mathcal{L}
\end{equation*}
that is $W(g,f)\ast\theta=b $ is a Schwartz function. Hence the right-hand
side of \eqref{ff6} is also continuous $\mathcal{S}^{\prime }({\mathbb{R}%
^{2n}})\to\mathbb{C}$, as a functional of $a$, and therefore \eqref{ff6}
holds not only for $a\in \mathcal{L}$ but for every $a\in\mathcal{S}^{\prime
}({\mathbb{R}^{2n}})$, because $\mathcal{L}$ is dense in $\mathcal{S}%
^{\prime }({\mathbb{R}^{2n}})$ (cf.\ \cite[Lemma 7]{gossnic}).

Finally, we have already proved that $W(g,f)\ast\theta$, and therefore $%
\widehat{W(g,f)} \widehat{\theta}$, is a Schwartz function. For suitable
fixed Schwartz functions $f=g$ (e.g. a Gaussian) we have $\widehat{Wf}%
(w)\not=0$ for every $w\in{\mathbb{R}^{2n}}$, which implies that the
distribution $\widehat{\theta}$ is in fact smooth in ${\mathbb{R}^{2n}}$.
\end{proof}

As a consequence of these results we obtain the following new
characterization of the Weyl transform.

\medskip

\begin{theorem}
\label{teon2} Consider a linear continuous mapping 
\begin{equation*}
\mathcal{S}^{\prime }({\mathbb{R}^{2n}})\to\mathcal{L}(\mathcal{S}(\mathbb{R}%
^n), \mathcal{S}^{\prime }(\mathbb{R}^n)),
\end{equation*}
say $a\longmapsto A_a$, satisfying

\medskip (i) the covariance property \eqref{ff5} with respect to Heisenberg
operators;

\medskip

(ii) the covariance property with respect to metaplectic operators: for $%
\widehat{S}\in {\rm Mp}(n,\mathbb{R})$ and $S = \pi^{Mp}(\widehat{S})\in\mathrm{Sp}%
(n,\mathbb{R})$, 
\begin{equation}  \label{ff5bis}
\widehat{S}^{-1} A_a \widehat{S}=A_{a\circ S}.
\end{equation}
Then, for some $c\in\mathbb{C}$ we have 
\begin{equation}  \label{ff9}
\langle A_a f,g\rangle=c \langle a,W(g,f)\rangle\quad\forall f,g\in\mathcal{S%
}(\mathbb{R}^n).
\end{equation}
If in addition we have $A_a=I$ for $a=1$, then $c=1$ in \eqref{ff9}.
\end{theorem}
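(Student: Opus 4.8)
The plan is to feed hypothesis (i) into Theorem~\ref{teon} to produce a Cohen-type kernel, and then let hypothesis (ii) collapse it, in exact parallel with the proof of Theorem~\ref{teonew}. Since (i) is precisely the covariance condition \eqref{ff5} of Theorem~\ref{teon}, that theorem supplies a distribution $\theta\in\mathcal{S}^{\prime}({\mathbb{R}^{2n}})$, with $\widehat{\theta}$ smooth, such that $\langle A_af,g\rangle=\langle a,W(g,f)\ast\theta\rangle$ for all $f,g\in\mathcal{S}(\mathbb{R}^n)$; moreover, as was shown in the course of that proof, $W(g,f)\ast\theta$ is in fact a Schwartz function for all $f,g$. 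It then remains to show that (ii) forces $\theta=c\,\delta$, whence \eqref{ff9} follows at once, and that the extra normalization forces $c=1$.

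To bring (ii) into play, fix $\widehat{S}\in\mathrm{Mp}(n,\mathbb{R})$ with $S=\pi^{\mathrm{Mp}}(\widehat{S})$ and $f\in\mathcal{S}(\mathbb{R}^n)$, and compute $\langle\widehat{S}^{-1}A_a\widehat{S}f,f\rangle$ in two ways. On the one hand, \eqref{ff5bis} together with the representation above gives
\[
\langle\widehat{S}^{-1}A_a\widehat{S}f,f\rangle=\langle A_{a\circ S}f,f\rangle=\langle a\circ S,\,Wf\ast\theta\rangle=\langle a,\,(Wf\ast\theta)\circ S^{-1}\rangle ,
\]
using the change of variables $\langle a\circ S,\varphi\rangle=\langle a,\varphi\circ S^{-1}\rangle$, legitimate because $\det S=1$. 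On the other hand, since $\widehat{S}$ is unitary,
\[
\langle\widehat{S}^{-1}A_a\widehat{S}f,f\rangle=\langle A_a\widehat{S}f,\widehat{S}f\rangle=\langle a,\,W(\widehat{S}f)\ast\theta\rangle ,
\]
and, since $W(\widehat{S}f)=Wf\circ S^{-1}$ by the metaplectic covariance of the Wigner transform (Theorem~\ref{teo22}), the distributional identity $(h\circ S^{-1})\ast\theta=(h\ast(\theta\circ S))\circ S^{-1}$ (again a consequence of $\det S=1$) yields $W(\widehat{S}f)\ast\theta=\big(Wf\ast(\theta\circ S)\big)\circ S^{-1}$. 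As $a\in\mathcal{S}^{\prime}({\mathbb{R}^{2n}})$ is arbitrary, comparing the two computations gives $Wf\ast(\theta\circ S)=Wf\ast\theta$ for every $f\in\mathcal{S}(\mathbb{R}^n)$.

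Taking Fourier transforms gives $\widehat{Wf}\,\widehat{\theta\circ S}=\widehat{Wf}\,\widehat{\theta}$; choosing $f$ a Gaussian makes $\widehat{Wf}$ a nowhere-vanishing Gaussian on ${\mathbb{R}^{2n}}$, so $\theta\circ S=\theta$, and since $\widehat{S}$ was arbitrary this holds for every $S\in\mathrm{Sp}(n,\mathbb{R})$. Proposition~\ref{teosimpinv} then forces $\theta=c_0+c_1\delta$. The constant term is killed by the Schwartz property: for $f\neq0$ we have $Wf\ast\theta=c_0\|f\|_{L^2}^2+c_1\,Wf$ (using $\int_{{\mathbb{R}^{2n}}}Wf=\|f\|_{L^2}^2$), which cannot be Schwartz unless $c_0=0$. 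Hence $\theta=c_1\delta=:c\,\delta$, and therefore $\langle A_af,g\rangle=c\,\langle a,W(g,f)\rangle$, i.e.\ \eqref{ff9}. Finally, if $A_a=I$ for $a=1$, then taking $a=1$ and $g=f$ gives $\|f\|_{L^2}^2=\langle A_1f,f\rangle=c\langle 1,Wf\rangle=c\int_{{\mathbb{R}^{2n}}}Wf=c\|f\|_{L^2}^2$, so $c=1$.

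The only genuinely delicate point is the bookkeeping of $\widehat{S}$, $S$ and $S^{-1}$ in the two evaluations of $\langle\widehat{S}^{-1}A_a\widehat{S}f,f\rangle$, together with the two distributional change-of-variable identities used there (both resting on $\det S=1$). Once the invariance $Wf\ast(\theta\circ S)=Wf\ast\theta$ is in hand, everything else is a direct application of Proposition~\ref{teosimpinv}, mirroring the proof of Theorem~\ref{teonew}.
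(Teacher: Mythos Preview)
Your proof is correct and follows essentially the same route as the paper: invoke Theorem~\ref{teon} to get the Cohen kernel $\theta$, use (ii) together with the symplectic covariance of $W$ to obtain $Wf\ast(\theta\circ S)=Wf\ast\theta$, then Proposition~\ref{teosimpinv} to force $\theta=c_0+c_1\delta$. The only cosmetic difference is in eliminating $c_0$: the paper simply notes that $\widehat{\theta}$ is smooth (so the $\delta$-contribution from $\widehat{c_0}$ is impossible), whereas you argue that $Wf\ast\theta$ being Schwartz rules out a nonzero constant term---both are valid and equally short.
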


\begin{proof}
Again by polarization it is sufficient to prove \eqref{ff9} for $f=g$.

By Theorem \ref{teon} we have 
\begin{equation}  \label{ff10}
\langle A_af,f\rangle=\langle a, Wf\ast\theta\rangle \quad \forall f,g\in 
\mathcal{S}(\mathbb{R}^d)
\end{equation}
for some distribution $\theta\in\mathcal{S}^{\prime }({\mathbb{R}^{2n}})$
with $\widehat{\theta}$ smooth in ${\mathbb{R}^{2n}}$. Let $Qf:=Wf\ast\theta$%
. We have therefore, by Proposition \ref{pro36}, 
\begin{equation*}
\langle A_a \widehat{S}f,\widehat{S}f\rangle=\langle a,Q(\widehat{S}
f)\rangle=\langle a,[Wf\ast(\theta\circ S)]\circ S^{-1}\rangle.
\end{equation*}
On the other hand, using {\it (ii)} we have 
\begin{equation*}
\langle A_a \widehat{S}f,\widehat{S}g\rangle=\langle a\circ
S,Qf\rangle=\langle a,Qf\circ S^{-1}\rangle.
\end{equation*}
Hence it must be $Wf\ast\theta=Wf\ast(\theta\circ S)$ for every $f\in%
\mathcal{S}(\mathbb{R}^n)$. As in the proof of Theorem \ref{teonew} we
deduce that $\theta\circ S=\theta$ for every $S\in {\rm Sp}(n,\mathbb{R})$.
Proposition \ref{teosimpinv} implies that $\theta=c_0+c_1\delta$, for some $%
c_0,c_1\in\mathbb{C}$, but $\widehat{\theta}$ is smooth in ${\mathbb{R}^{2n}}
$, so that $c_0=0$. We have therefore $\theta=c\delta$ in \eqref{ff10},
which gives \eqref{ff9}.

The last part of the statement is clear, because $\langle f,g\rangle=\langle
1, W(g,f)\rangle$.
\end{proof}

\begin{remark}
\label{rem00} The result in Theorem \ref{teon2} can be compared with the
known characterization of the Weyl transform. Is is proved in \cite[Sections
7.5-7.6, pages 578-579]{stein} and \cite[Theorem 30.2]{wong} that the
conclusion of Theorem \ref{teon2} holds (with $c=1$) if one assumes

\medskip (i)' $A_a f= af$ for $a=a(x)\in L^\infty(\mathbb{R}^n)$

\medskip\noindent \textit{and} (ii).

We emphasize that, instead, in Theorem \ref{teon2}, (i) and (ii) together
amount to assuming the covariance with respect to the {\rm extended}
symplectic group.
\end{remark}

\section*{Acknowledgments}

E. Cordero and F. Nicola were partially supported by the Gruppo Nazionale
per l'Analisi Matematica, la Probabilit\`{a} e le loro Applicazioni (GNAMPA)
of the Istituto Nazionale di Alta Matematica (INdAM). M. de Gosson was
supported by the Austrian Research Agency FWF (Grant number P 27773). M. D\"orfler  has been supported by the Vienna Science and Technology Fund (WWTF)
through project MA14-018.

\end{document}